\newenvironment{nitemize}
{\begin{list}{$\bullet$} 
{\setlength{\parsep}{1ex}
\setlength{\topsep}{1.1ex}
\setlength{\partopsep}{0ex}
\setlength{\labelwidth}{0.5cm}
\setlength{\itemindent}{0cm}
\setlength{\itemsep}{0cm}
\setlength{\leftmargin}{0.7cm}
\setlength{\labelsep}{0.2cm}}}
{\end{list}}
\newenvironment{proof-theo}{\noindent{\it Proof of Theorem \ref{strength-Plachky-Steinebach}.}}{\fbox{}\\}
\newenvironment{proof-corollary}{\noindent{\it Proof of Corollary \ref{strength-Plachky-Steinebach-case-a)}.}}{\fbox{}\\}
\newenvironment{proof-claim 1}{\noindent{\it Proof of Claim \ref{claim 1}.}}{\fbox{}\\}
\newenvironment{proof-claim 2}{\noindent{\it Proof of Claim  \ref{claim 2}.}}{\fbox{}\\}
\newenvironment{proof-claim 3}{\noindent{\it Proof of Claim  \ref{claim 3}.}}{\fbox{}\\}
\newtheorem{theorem}{Theorem}
\newtheorem{corollary}{Corollary}
\newtheorem{lemma}{Lemma}
\newtheorem{claim}{Claim}
\theoremstyle{remark}
\newtheorem{remark}{Remark}
\theoremstyle{definition}
\theoremstyle{definition}
\def\N{{\mathbb N}}
\def\R{{\mathbb R}}
\newcommand{\ie}{{\it i.e.\/}\ }
\newcommand{\eg}{{\it e.g.\/}\ }
\newcommand{\cf}{{\it cf.\/}\ }
\def\text{\hbox}
\title[]{Improvements  of Plachky-Steinebach theorem}
\begin{document}

\author{Henri Comman$^\dag$}
\address{Pontificia Universidad Cat\'{o}lica de Valparaiso,  Avenida Brasil 2950, Valparaiso, Chile}
\email{henri.comman@pucv.cl}
\thanks{$\dag$ Partially supported by FONDECYT grant 1120493.}

%\date{}

\begin{abstract}
We show that the conclusion of Plachky-Steinebach  theorem  holds true 
for intervals of the form $\left]\overline{L}_r'(\lambda),y\right[$,  where     $\overline{L}_r'(\lambda)$  is   the right derivative  (but not necessarily  a derivative)  of the generalized log-moment generating function $\overline{L}$  at some $\lambda> 0$ and  $y\in\ \left]\overline{L}_r'(\lambda),+\infty\right]$, 
under the only two following  conditions:  $(a)$  $\overline{L}'_r(\lambda)$ is  a limit point of  the set  $\left\{\overline{L}'_r(t):t>\lambda\right\}$,    $(b)$ $\overline{L}(t_i)$ is a limit  with   $t_i$ belonging to  some  decreasing sequence  converging to $\sup\{t>\lambda: \overline{L}_{\mid ]\lambda,t]}\ \textnormal{is affine}\}$. 
   By replacing $\overline{L}_r'(\lambda)$   by $\overline{L}_r'(\lambda^+)$, 
     the above result  extends verbatim  to the case $\lambda=0$ (replacing $(a)$ by the right continuity of $\overline{L}$ at zero when 
      $\overline{L}_r'(0^+)=-\infty$).    No  hypothesis  is made on $\overline{L}_{]-\infty,\lambda[}$  (\eg     $\overline{L}_{]-\infty,\lambda[}$ may be  the constant $+\infty$ when $\lambda=0$);  $\lambda\ge 0$ may  be a non-differentiability point of $\overline{L}$ and moreover a limit point of non-differentiability points of $\overline{L}$; $\lambda=0$ may be  a left and right  discontinuity point of $\overline{L}$. The map $\overline{L}_{\mid ]\lambda,\lambda+\varepsilon[}$ may fail to be strictly convex for all $\varepsilon>0$.   If we drop the  assumption $(b)$,   then the same  conclusion holds  with upper limits in place of limits.
    Furthermore, the foregoing  is valid  for general nets $(\mu_\alpha,c_\alpha)$ of Borel probability measures and powers (in place of the sequence $(\mu_n,n^{-1})$) and replacing the intervals $\left]\overline{L}_r'(\lambda^+),y\right[$  by $]x_\alpha,y_\alpha[$ or $[x_\alpha,y_\alpha]$,  where  $(x_\alpha,y_\alpha)$  is   any  net  such that $(x_\alpha)$
   converges  to  $\overline{L}_r'(\lambda^+)$ and  $\liminf_\alpha y_\alpha>\overline{L}_r'(\lambda^+)$.
    \end{abstract}

\subjclass[2000]{Primary: 60F10}

%\keywords{}

\maketitle

\section{Introduction and statement of the results}\label{intro}

 Let $(\mu_\alpha,c_\alpha)$ be a net where $\mu_\alpha$ is a  
 Borel probability measure on $\R$, $c_\alpha>0$ and $(c_\alpha)$ converges to zero. Let   $\overline{L}$ 
 be the  generalized  log-moment generating function associated with $(\mu_\alpha,c_\alpha)$, defined 
  for each  $t\in\R$ by
 \begin{equation}\label{introduction-eq30}
  \overline{L}(t)=\limsup_\alpha c_\alpha\log\int_\R e^{c_\alpha^{-1} t x}\mu_\alpha(dx);
   \end{equation}
    when the above upper limit is a limit we  write $L(t)$ in place of $\overline{L}(t)$. Let $\lambda>0$ and assume that the derivative map    $L'$   exists   and is  strictly monotone  on a  neighbourhood of $\lambda$ (in particular,  $L(t)$ exists in $\R$  for all $t$ in this neighbourhood).  When  $(\mu_\alpha,c_\alpha)$ is a sequence of the form $(\mu_n,n^{-1})$, Plachky-Steinebach theorem (\cite{Plachky_Steinebach(75)PerMatHung6}) asserts that  for each sequence $(x_n)$ of real numbers converging to $L'(\lambda)$ we have 
   \begin{equation}\label{introduction-eq40}
   \lim_n n^{-1}\log\mu_n(]x_n,+\infty[)=L(\lambda)-\lambda L'(\lambda). 
   \end{equation}
     (Note that  $L(\lambda)-\lambda L'(\lambda)=-\overline{L}^*(L'(\lambda))=\inf_{t> 0}\{\overline{L}(t)-t L'(\lambda)\}$,  where $\overline{L}^*$ denotes the Legendre-Fenchel transform of $\overline{L}$.)

       In this paper,   we improve   the above result in various ways:         First, we weaken all the hypotheses by 
       (a)  replacing the differentiability  of $\overline{L}$ on a neighbourhood of $\lambda$ by  the condition  that   $\overline{L}'_r(\lambda)$ is  a limit point of  the set  $\left\{\overline{L}'_r(t):t>\lambda\right\}$, where $\overline{L}'_r$ denotes the right derivative map of $\overline{L}$, (b)
       removing entirely the strict convexity hypothesis, (c) allowing $\lambda=0$ (with the only hypothesis of right continuity of $\overline{L}$ at zero when $\overline{L}'_r(0^+)=-\infty$), 
        (d) requiring the existence of $L$ only on  a suitable sequence, (e) allowing general nets $(\mu_\alpha,c_\alpha,x_\alpha)$  in place of the sequence $(\mu_n,n^{-1},x_n)$;     second, we strengthen the conclusion by allowing a more general class of intervals in the left hand side of (\ref{introduction-eq40}); third, we  generalize all the above by giving  a version with upper limits.

     For each $\lambda\ge 0$,      our result covers all situations except  when the  condition  mentioned in $(a)$ or in $(c)$ fails, \ie  excluding    the  following ones (\cf Lemma \ref{existence L on t}): 
      \begin{itemize}
\item[$(i)$] The map ${\overline{L}'_r}_{\mid ]\lambda,+\infty[}$ takes only infinite values (equivalently,  either $\overline{L}_{\mid [0,+\infty[}$ is improper, or $\overline{L}(t)=+\infty$ for all $t>\lambda$).
\item[$(ii)$]  There exists $T\in\ ]\lambda,+\infty]$ such that  $\overline{L}_{\mid ]\lambda,T[}$ is affine,  with  $\overline{L}'_r(\lambda^+)<\overline{L}'_r(T)<+\infty$ when $T<+\infty$.
\item[$(iii)$] $\lambda=0$, $\overline{L}_r'(0^+)=-\infty$ and $\overline{L}$ is right discontinuous at zero.
\end{itemize}
(In particular, in the most common case where $\overline{L}_{[\lambda,\lambda+\varepsilon[}$ is real-valued and continuous for some $\varepsilon>0$,  only $(ii)$ is excluded.)
 Equivalently,  Theorem \ref{strength-Plachky-Steinebach} applies in  (and only in) all the  cases 
 where   $\overline{L}'_r(\lambda^+)=\lim_i \overline{L}'_r(\lambda_i)$  for some   sequence  $(\lambda_i)$   fulfilling  eventually  
 \begin{equation}\label{introduction-eq-60}
 \overline{L}'_r(\lambda^+)<\overline{L}'_r(\lambda_i)<+\infty,
 \end{equation}
with $\overline{L}(0^+)=0$ when $\lambda=0$ and $\overline{L}_r'(0^+)=-\infty$ (note that the above condition  is satisfied under the hypotheses of Plachky-Steinebach theorem);  then,  there is
 a unique  real number ${\tilde{\lambda}}\ge\lambda$  fulfillling $\overline{L}'_r(\lambda^+)=\overline{L}'_r({\tilde{\lambda}}^+)$ and $\overline{L}'_r(t)>\overline{L}'_r(\lambda^+)$ for all $t>{\tilde{\lambda}}$; in fact, 
${\tilde{\lambda}}=\sup\{t>\lambda: \overline{L}_{\mid ]\lambda,t]}\ \textnormal{is affine}\}=\lim_i \lambda_i$ for every sequence as above (\cf Lemma \ref{existence L on t}).

  Let us  emphasize  that    no hypothesis  is made on the map $\overline{L}_{\mid ]-\infty,\lambda[}$. 
 The proof deals  only with  the properties of the right derivative map  ${\overline{L}'_r}_{\mid [\lambda,\lambda+\varepsilon[}$;  
in particular, the following special situations may arise: 
  
 \begin{itemize} 
  \item  $\lambda$ may be a non-differentiability point of $\overline{L}$,  and moreover a limit point of non-differentiability points of $\overline{L}$
  (\eg when  $\lambda={\tilde{\lambda}}$ and eventually  $\lambda_i$ is a non-differentiability point of $\overline{L}$).
  \item   $\overline{L}$ may be  left  discontinuous  at $\lambda$ when  $\lambda=0$  (which is the case when $\overline{L}_r'(0)=-\infty$) and  moreover  right discontinuous  at $0$    (when $\overline{L}_r'(0)=-\infty<\overline{L}'_r(0^+)$); \cf Remark \ref{remark-difference lambda-positive-vs-lambda-zero}.
  \item    $\overline{L}_{\mid ]\lambda,\lambda+\varepsilon[}$  may  not be  strictly convex for all $\varepsilon>0$ (\eg when  $\lambda={\tilde{\lambda}}$ and  eventually $\overline{L}_{\mid [\lambda_{i+1},\lambda_i]}$ is affine);  when $\lambda<{\tilde{\lambda}}$, $\overline{L}$ is not strictly convex in any neighbourhood of $\lambda$ or ${\tilde{\lambda}}$.
  \item As regards  the strong  version with limits, for each  $t<\lambda$,  $\overline{L}(t)$ may not   be a limit. 
 \end{itemize}
  The foregoing contrasts  sharply with the hypotheses of Plachky-Steinebach theorem, which  require $\lambda>0$ and  some $\varepsilon>0$ such that     $L(t)$  exists in $\R$ for all $t\in\  ]\lambda-\varepsilon,\lambda+\varepsilon[$ and   $L_{\mid  ]\lambda-\varepsilon,\lambda+\varepsilon[}$  is strictly convex and differentiable.
  
   Theorem \ref{strength-Plachky-Steinebach} \footnote{Some words of caution about the notations: First, 
 the net  $(\mu_\alpha)$ is  considered as a net of Borel  measures on $[-\infty,+\infty]$  in order to give a sense to $\mu_\alpha([x_\alpha,y_\alpha])$ when $x_\alpha=-\infty$ or $y_\alpha=+\infty$; second, when   $\overline{L}_r'(\lambda^+)=-\infty$, we adopt   the convention $0\cdot\overline{L}'_r(\lambda^+)=0$; third,  in order to simplify the wording of the theorem, 
     we do not distinguish with  the notations  between the cases $\lambda>0$ and $\lambda=0$, so that we use  $\overline{L}_r'(\lambda^+)$ and $\overline{L}(\lambda^+)$,  although  $\overline{L}_r'(\lambda^+)=\overline{L}_r'(\lambda)$ and $\overline{L}(\lambda^+)=\overline{L}(\lambda)$ when $\lambda>0$, and  $\overline{L}_r'(\lambda^+)=-\infty$ implies $\lambda=0$ (\cf Lemma \ref{improper-L}).}
     below constitutes the first general result giving the upper limit (and a fortiori,  the  limit) of 
   $(c_\alpha\log\mu_\alpha([x_\alpha,y_\alpha]))$ or   $(c_\alpha\log\mu_\alpha(]x_\alpha,y_\alpha[)$ in terms of $\overline{L}^*$, when $\lim_\alpha x_\alpha$  is not a value of the derivative map  of the  generalized  log-moment generating function (\cf Remark \ref{remark G-E theo}).

   \begin{theorem}\label{strength-Plachky-Steinebach}
Let $\lambda\ge 0$.  When $\overline{L}_r'(\lambda^+)>-\infty$ we assume that  $\overline{L}'_r(\lambda^+)$ is  a limit point of  the set  $\left\{\overline{L}'_r(t):t>\lambda\right\}$;  when $\overline{L}_r'(\lambda^+)=-\infty$ we assume that $\overline{L}(\lambda)=\overline{L}(\lambda^+)$.  For each net  $(x_\alpha,y_\alpha)$ in $[-\infty,+\infty]^2$  such that  $(x_\alpha)$ converges   to  $\overline{L}_r'(\lambda^+)$  and       $\liminf_\alpha y_\alpha>\overline{L}_r'(\lambda^+)$ we have 
        \begin{multline*}
      \limsup_\alpha c_\alpha\log\mu_\alpha([x_\alpha,y_\alpha])
=   \limsup_\alpha c_\alpha\log\mu_\alpha(]x_\alpha,y_\alpha[) =  \overline{L}(\lambda^+)-\lambda\overline{L}_r'(\lambda^+)
  \\
  = \left\{\begin{array}{ll}
 -\overline{L}^*(\overline{L}_r'(\lambda^+))= \inf_{t>0}\{\overline{L}(t)-t\overline{L}_r'(\lambda^+)\}  & if\ \overline{L}_r'(\lambda^+)\neq-\infty,
  \\ 
-\lim_\alpha \overline{L}^*(x_\alpha)= 0 & if\ \overline{L}_r'(\lambda^+)=-\infty.
      \end{array}
    \right.
    \end{multline*}  
       Furthermore, if $L(t_i)$ exists    for a   sequence $(t_i)$ in $]{\tilde{\lambda}},+\infty[$  converging  to ${\tilde{\lambda}}$, then the  above upper limits are  limits (with  ${\tilde{\lambda}}=\sup\{t>\lambda: \overline{L}_{\mid ]\lambda,t]}\ \textit{is affine}\}$).  
      \end{theorem}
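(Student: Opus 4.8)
The plan is to prove the stated equality as a chain — an upper bound for $\mu_\alpha([x_\alpha,+\infty[)$ and a matching lower bound for $\mu_\alpha(]x_\alpha,y_\alpha[)$ — the whole difficulty lying in the lower bound and, more precisely, in the fact that it must be carried out at a parameter just to the right of $\tilde\lambda$ rather than at $\lambda$. Throughout put $Z_\alpha(t)=\int_\R e^{c_\alpha^{-1}tx}\,\mu_\alpha(dx)$ and $V=\overline{L}(\lambda^+)-\lambda\overline{L}_r'(\lambda^+)$ (with $0\cdot(-\infty)=0$). First I would record the standing facts: each $t\mapsto c_\alpha\log Z_\alpha(t)$ is convex, hence so is $\overline{L}$, with $\overline{L}(0)=0$; by Lemma~\ref{existence L on t} (which excludes precisely cases $(i)$–$(iii)$) we have $\overline{L}_r'(\lambda^+)\ne+\infty$, $\tilde\lambda<+\infty$ with $\overline{L}_r'(\tilde\lambda^+)=\overline{L}_r'(\lambda^+)$ and $\overline{L}_r'(t)>\overline{L}_r'(\lambda^+)$ for $t>\tilde\lambda$, and $\overline{L}$ is affine of slope $\overline{L}_r'(\lambda^+)$ on $]\lambda,\tilde\lambda[$ and is finite (hence continuous) on a right-neighbourhood $]\tilde\lambda,\tilde\lambda+\eta_0]$ of $\tilde\lambda$; also $\overline{L}(\lambda)<+\infty$ (Lemma~\ref{improper-L}), and when $\overline{L}_r'(\lambda^+)=-\infty$ — so $\lambda=0$ — the hypothesis gives $\overline{L}(0^+)=0$, which forces $\overline{L}\equiv+\infty$ on $]-\infty,0[$, $t\,\overline{L}_r'(t)\to0$ as $t\downarrow0$, and $\overline{L}^*(x_\alpha)\to0$. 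A routine convex-analysis argument then identifies $V=\inf_{t>0}\{\overline{L}(t)-t\overline{L}_r'(\lambda^+)\}=-\overline{L}^*(\overline{L}_r'(\lambda^+))$ when $\overline{L}_r'(\lambda^+)\in\R$, and $V=0$ when $\overline{L}_r'(\lambda^+)=-\infty$; this is the value appearing in the theorem.

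For the upper bound, Markov's inequality gives $c_\alpha\log\mu_\alpha([x_\alpha,+\infty[)\le -t\,x_\alpha+c_\alpha\log Z_\alpha(t)$ for every $t>0$; since $x_\alpha\to\overline{L}_r'(\lambda^+)$, passing to the upper limit and then taking the infimum over $t>0$ yields $\limsup_\alpha c_\alpha\log\mu_\alpha([x_\alpha,+\infty[)\le V$ when $\overline{L}_r'(\lambda^+)\in\R$, while when $\overline{L}_r'(\lambda^+)=-\infty$ the bound $\mu_\alpha([x_\alpha,+\infty[)\le1$ already gives $\le0=V$. Since $\mu_\alpha(]x_\alpha,y_\alpha[)\le\mu_\alpha([x_\alpha,y_\alpha])\le\mu_\alpha([x_\alpha,+\infty[)$, all three upper limits in the statement are $\le V$.

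The lower bound is the substantial step. Fix $s_2\in\,]\tilde\lambda,\tilde\lambda+\eta_0[$, shrinking $\eta_0$ so that $\overline{L}_r'(\lambda^+)<\overline{L}_r'(s_2)<\liminf_\alpha y_\alpha$ (possible since $\overline{L}_r'(s_2)\downarrow\overline{L}_r'(\lambda^+)<\liminf_\alpha y_\alpha$ as $s_2\downarrow\tilde\lambda$), and a parameter $s_1$, $0\le s_1<s_2$, with $\overline{L}(s_1)<+\infty$ and such that the average slope of $\overline{L}$ over $[s_1,s_2]$ strictly exceeds $\overline{L}_r'(\lambda^+)$; one may take $s_1=\lambda$ if $\lambda>0$ and $s_1=\tfrac12 s_2$ if $\lambda=0$, this being legitimate because $s_2>\tilde\lambda$ and $\overline{L}$ has slope $>\overline{L}_r'(\lambda^+)$ on $]\tilde\lambda,s_2[$. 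Then pick $B\in\,]\overline{L}_r'(s_2),\liminf_\alpha y_\alpha[$ and $s_2'\in\,]s_2,\tilde\lambda+\eta_0]$ with $\bigl(\overline{L}(s_2')-\overline{L}(s_2)\bigr)/(s_2'-s_2)<B$ (possible as this quotient decreases to $\overline{L}_r'(s_2)<B$ when $s_2'\downarrow s_2$). For $\alpha$ large, $x_\alpha<B<y_\alpha$, and splitting $Z_\alpha(s_2)$ over $]-\infty,x_\alpha]$, $]x_\alpha,B]$, $]B,+\infty[$ and bounding the two outer integrals by $e^{c_\alpha^{-1}(s_2-s_1)x_\alpha}Z_\alpha(s_1)$ and by $e^{-c_\alpha^{-1}(s_2'-s_2)B}Z_\alpha(s_2')$ respectively, one gets
\begin{multline*}
\mu_\alpha(]x_\alpha,y_\alpha[)\ \ge\ \mu_\alpha(]x_\alpha,B])\\
\ge\ e^{-c_\alpha^{-1}s_2B}\Bigl(Z_\alpha(s_2)-e^{c_\alpha^{-1}(s_2-s_1)x_\alpha}Z_\alpha(s_1)-e^{-c_\alpha^{-1}(s_2'-s_2)B}Z_\alpha(s_2')\Bigr).
\end{multline*}
Along a subnet realizing $\overline{L}(s_2)=\limsup_\alpha c_\alpha\log Z_\alpha(s_2)$, both subtracted terms are $o(Z_\alpha(s_2))$: for the first, $\limsup_\alpha\bigl[(s_2-s_1)x_\alpha+c_\alpha\log Z_\alpha(s_1)-c_\alpha\log Z_\alpha(s_2)\bigr]\le(s_2-s_1)\overline{L}_r'(\lambda^+)+\overline{L}(s_1)-\overline{L}(s_2)<0$ by the choice of $s_1$ and $x_\alpha\to\overline{L}_r'(\lambda^+)$; for the second, $\limsup_\alpha\bigl[-(s_2'-s_2)B+c_\alpha\log Z_\alpha(s_2')-c_\alpha\log Z_\alpha(s_2)\bigr]\le-(s_2'-s_2)B+\overline{L}(s_2')-\overline{L}(s_2)<0$ by the choice of $s_2'$. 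Hence the bracket is $\ge\tfrac12 Z_\alpha(s_2)$ eventually along the subnet, so $\limsup_\alpha c_\alpha\log\mu_\alpha(]x_\alpha,y_\alpha[)\ge\overline{L}(s_2)-s_2B$. Letting $B\downarrow\overline{L}_r'(s_2)$ and then $s_2\downarrow\tilde\lambda$ — using continuity of $\overline{L}$ at $\tilde\lambda$, $\overline{L}_r'(s_2)\to\overline{L}_r'(\lambda^+)$, and $\overline{L}(\tilde\lambda)=\overline{L}(\lambda^+)+\overline{L}_r'(\lambda^+)(\tilde\lambda-\lambda)$ (when $\tilde\lambda=0$ instead $\overline{L}(s_2)\to0$ and $s_2\overline{L}_r'(s_2)\to0$) — the right-hand side tends to $V$. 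Together with the upper bound, all three upper limits equal $V$, proving the first part.

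For the version with limits, one reruns the lower bound with $s_2=t_i$ for $i$ large (so $t_i\in\,]\tilde\lambda,\tilde\lambda+\eta_0[$): since now $c_\alpha\log Z_\alpha(t_i)\to L(t_i)=\overline{L}(t_i)$ along the \emph{whole} net, each subtracted term is $\le e^{-\delta/c_\alpha}Z_\alpha(t_i)$ for some $\delta>0$ and all large $\alpha$, so $\liminf_\alpha c_\alpha\log\mu_\alpha(]x_\alpha,y_\alpha[)\ge\overline{L}(t_i)-t_iB$; letting $B\downarrow\overline{L}_r'(t_i)$ and $i\to\infty$ gives $\liminf_\alpha c_\alpha\log\mu_\alpha(]x_\alpha,y_\alpha[)\ge V$, and with $\mu_\alpha(]x_\alpha,y_\alpha[)\le\mu_\alpha([x_\alpha,y_\alpha])$ and the upper bound all three upper limits become limits equal to $V$. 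I expect the main obstacle to be exactly this lower bound, and within it the necessity of tilting at $s_2>\tilde\lambda$: if one tilts near $\lambda$ (equivalently lets $s_2\to\lambda$, or takes $s_1=0$ when $\overline{L}$ jumps down at $0$), the ``left-tail'' term $e^{c_\alpha^{-1}(s_2-s_1)x_\alpha}Z_\alpha(s_1)$ ceases to be $o(Z_\alpha(s_2))$ — it in fact dominates $Z_\alpha(s_2)$ as soon as $V<0$, the generic case for $\lambda>0$ — and it is only because $\overline{L}$ is affine on $[\lambda,\tilde\lambda]$ and then strictly steeper that the average slope over $[\lambda,s_2]$ beats $\overline{L}_r'(\lambda^+)$, restoring the required domination; the same kink at $\overline{L}_r'(\lambda^+)=\overline{L}_r'(\tilde\lambda^+)$ is what lets $B$ be pushed down to $\overline{L}_r'(\lambda^+)$ (legitimately, because $\liminf_\alpha y_\alpha>\overline{L}_r'(\lambda^+)$) and so recovers the exact value $V$.
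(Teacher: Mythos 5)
Your proof is correct, and its core — the lower bound — follows a genuinely different route from the paper's. The upper bound is the same Chebyshev/Chernoff estimate. For the lower bound, the paper never tilts: it introduces the local functional $l_0(x)=-\lim_{\varepsilon\to0}\limsup_\alpha c_\alpha\log\mu_\alpha(]x-\varepsilon,x+\varepsilon[)$, proves $l_0(\overline{L}_r'(t))=\overline{L}^*(\overline{L}_r'(t))$ for $t$ just above $\lambda$ via the variational formula of \cite{Comman(2007)STAPRO77} and a subgradient argument (Lemmas \ref{l0-greater-L*}--\ref{l0=l1=L*-on-left-right derivative}), and then derives the lower bound by contradiction, using the limit-point hypothesis to place some $\overline{L}_r'(t)$ strictly between $x_\alpha$ and $y_\alpha$. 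You instead run the classical Plachky--Steinebach splitting of $Z_\alpha(s_2)$ directly, and you correctly identify the two choices that make the degenerate cases work: tilting at $s_2>\tilde\lambda$ (not near $\lambda$), and anchoring the left tail at $s_1=\lambda$ (resp.\ $s_1=s_2/2$ when $\lambda=0$) so that the average slope of $\overline{L}$ over $[s_1,s_2]$ strictly beats $\overline{L}_r'(\lambda^+)$ even across the affine piece and the possible jump at $0$. For the limit version the divergence is again real: the paper argues via arbitrary subnets, verifying (Claims \ref{claim 1} and \ref{claim 2}) that each subnet's own generalized log-moment generating function satisfies the hypotheses with the same value; you instead take $s_2=t_i$, where convergence along the whole net upgrades the domination of the tail terms to an eventual statement and hence to a $\liminf$ bound. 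Your approach buys self-containedness and elementarity (no appeal to the variational formula of \cite{Comman(2007)STAPRO77}, no subnet bookkeeping); the paper's $l_0$ machinery is reused elsewhere (Corollary \ref{strength-Plachky-Steinebach-case-a)}, Remark \ref{validity of lim_alpha L*(x_alpha)}) and is more structural. The only items you leave as ``routine'' — the identities $V=-\overline{L}^*(\overline{L}_r'(\lambda^+))=\inf_{t>0}\{\overline{L}(t)-t\overline{L}_r'(\lambda^+)\}$, $\lim_\alpha\overline{L}^*(x_\alpha)=0$ and $t\overline{L}_r'(t)\to0$ as $t\downarrow0$ in the $-\infty$ case — are indeed routine consequences of convexity and $\overline{L}(0^+)=0$ (they occupy Lemma \ref{inf L*} in the paper), so there is no gap.
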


         It  is possible   that for some $\varepsilon>0$,  $\lambda$ is the only point in $[0,\lambda+\varepsilon]$ 
      where          
         Theorem  \ref{strength-Plachky-Steinebach} applies
                 (\eg when $\lambda={\tilde{\lambda}}$,    $\overline{L}$ is not differentiable at $\lambda$,  and  eventually  $\lambda_i$ appearing in (\ref{introduction-eq-60}) is a non-differentiability point of $\overline{L}$ and 
   $\overline{L}_{[\lambda_{i+1},\lambda_i]}$ is affine; although this may seem to be an extreme case,   we show in Appendix A that there are plenty of such examples); in the light of the above, 
        Theorem  \ref{strength-Plachky-Steinebach} may be thought of as a pointwise version of  Plachky-Steinebach theorem.

 Hereafter, we focus  on the opposite situation, motivated by the following observation: 
   If  $L$ exists  as a    differentiable and strictly convex map  on some open  interval containing $\lambda>0$ and  zero (says $]-\varepsilon,\lambda+\varepsilon[$), then the hypotheses of Plachky-Steinebach theorem
           hold for all $t\in\ ]0,\lambda+\varepsilon[$ (in place of $\lambda$),  and  thus it  applies to  every sequence  converging to $L'(t)$ for all $t\in\ ]0,\lambda+\varepsilon[$; since 
        ${\overline{L}'}_{\mid ]0,\lambda+\varepsilon[}$ is in this case an increasing  homeomorphism,  
          the limit obtained, as a function of $L'(t)$,  is exactly $-{L^*}_{\mid ]L_r'(0),L_l'(\lambda+\varepsilon)[}$ (where $\overline{L}'_l$ denotes the left derivative map of  $\overline{L}$); 
          a    weak version  of this (with  the  constant sequence $(x_n)\equiv L'(t)$) has been 
      widely used in the context  of  dynamical systems 
                (\eg \cite{Hennion_Herve}, Lemma 13.2;  \cite{Bougerol_Lacroix}, Lemma 6.2; \cite{Chazottes_Ugalde}, Corollary 4.3; \cite{Chazottes_Leplaideur},  Theorem 2.2; \cite{Aimino_Vaienti}, Proposition 7).

              A much stronger result can be derived from Theorem \ref{strength-Plachky-Steinebach}, as shows the 
             following Corollary \ref{strength-Plachky-Steinebach-case-a)}; 
    aside the extension with upper limits and the possibility to consider nets in place of sequences,       the  improvements  are obtained first, by  allowing a more   general class of  intervals  in the conclusion, and second, by  weakening the hypotheses as follows:
    \begin{itemize}
     \item  The differentiability  of $\overline{L}_{\mid ]-\varepsilon,\lambda+\varepsilon[}$ is replaced by  the differentiability  of $\overline{L}_{\mid]\lambda,\lambda+\varepsilon[}$.
    \item   The strict  convexity of $\overline{L}_{\mid ]-\varepsilon,\lambda+\varepsilon[}$ is replaced by the condition that 
    ${\overline{L}'}_{\mid ]\lambda,\lambda+\varepsilon[}$ does not attain its supremum (equivalently, for each $t\in\ ]\lambda,\lambda+\varepsilon[$ the map $\overline{L}_{\mid[t,\lambda+\varepsilon[}$ is not affine). 
        
     \medskip
 
    (Note first, that this condition  is obviously fulfilled when 
    $\overline{L}_{\mid ]t,\lambda+\varepsilon[}$ is strictly convex for some $t\in[\lambda,\lambda+\varepsilon[$, and second, 
    it  is far weaker than strict  convexity: There may be an infinite  countable set   $\{S_i:i\in\N\}$ of non-empty mutually disjoint intervals    included in $]\lambda,\lambda+\varepsilon[$ on which $\overline{L}$  is affine;  when $\sup_i \sup S_i=\lambda+\varepsilon$, 
     the above condition implies     $\sup S_i<\lambda+\varepsilon$ for all $i\in\N$.) 
     
       \medskip

              \item The possibility to take $\lambda=0$, including the cases $\overline{L}'(0^+)=-\infty$  and  $\overline{L}'(0)=-\infty<\overline{L}'(0^+)$.
              
    \item  The version with limits only requires the existence  of $L$ on a sequence $(t_i)$ in $]\tilde{t},+\infty[$  converging  to $\tilde{t}$, for all $t\in[\lambda,\lambda+\varepsilon[$; note that the set $[\lambda,\lambda+\varepsilon[\setminus\{\tilde{t}:t\in[\lambda,\lambda+\varepsilon[\}$ contains all the  open intervals  on which $\overline{L}$ is affine. 
    
             \end{itemize}

                  \begin{corollary}\label{strength-Plachky-Steinebach-case-a)}
 Let $\lambda\ge 0$ and let $\varepsilon>0$. We assume that  $\overline{L}_{\mid ]\lambda,\lambda+\varepsilon[}$ is       differentiable and $\sup\left\{\overline{L}'(t): t\in\ ]\lambda,\lambda+\varepsilon[\right\}$ is not a maximum; when $\lambda=0$ and $\overline{L}_r'(0^+)=-\infty$,  we further assume that $\overline{L}$  is right continuous at zero. 
          \begin{nitemize}
          \item[a)]  For each $z\in\left[\overline{L}_r'(\lambda^+),\overline{L}_l'(\lambda+\varepsilon)\right[$,            
          there exists $t_z\in[\lambda,\lambda+\varepsilon[$ fulfilling $\overline{L}_r'(t_z^+)=z$, and for each  such  $t_z$, for each net   $(x_{\alpha,z})$ in $[-\infty,+\infty]$ converging to $z$, and for each net 
          $(y_{\alpha,z})$ in $[-\infty,+\infty]$ fulfilling  $\liminf_\alpha y_{\alpha,z}>z$, we have
      \begin{multline*}
      \limsup_\alpha c_\alpha\log\mu_\alpha([x_{\alpha,z},y_{\alpha,z}])
=\limsup_\alpha c_\alpha\log\mu_\alpha(]x_{\alpha,z},y_{\alpha,z}[)
       =   \overline{L}(t_z^+)-t_z z
       \\
       =\left\{\begin{array}{ll}
   -\overline{L}^*(z)=\inf_{t>0}\{\overline{L}(t)-tz\}  & if\ z\neq -\infty
  \\ 
-\lim_\alpha \overline{L}^*(x_{\alpha,z})=0 & if\ z=-\infty\ \  (\ie \lambda=0\ and\  z=\overline{L}_r'(0^+)=-\infty).
      \end{array}
    \right.
\end{multline*}
 Furthermore, if $L(t_{z,i})$ exists    for a   sequence $(t_{z,i})$ in $]\tilde{t_z},+\infty[$  converging  to $\tilde{t_z}$, then the  above upper limits are  limits (with  $\tilde{t_z}=\sup\{t>t_z: \overline{L}_{\mid ]t_z,t]}\ \textit{is affine}\}$).

     \item[b)]  The map $\left[\overline{L}_r'(\lambda^+),\overline{L}_l'(\lambda+\varepsilon)\right[\ni z\rightarrow\overline{L}(t_z^+)-t_z z$  is 
              $]-\infty,0]$-valued, 
             strictly decreasing, continuous, unbounded below if and only if  $\overline{L}_l'(\lambda+\varepsilon)=+\infty$,  and  vanishes at $\overline{L}_r'(\lambda^+)$  if and only if either $\lambda>0$ and $\overline{L}$ is differentiable at $\lambda$ and linear on $[0,\lambda]$, or 
$\lambda=0$ and $\overline{L}$  is right continuous at zero; its  restriction to 
$\left]\overline{L}_r'(\lambda^+),\overline{L}_l'(\lambda+\varepsilon)\right[$ is    strictly concave, and it is 
 furthermore differentiable when
                 $\overline{L}_{\mid ]{\tilde{\lambda}},\lambda+\varepsilon[}$ is   strictly convex (with  ${\tilde{\lambda}}=\sup\{t>\lambda: \overline{L}_{\mid ]\lambda,t]}\ \textit{is affine}\}$). 
\end{nitemize}
              \end{corollary}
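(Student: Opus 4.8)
The plan is to derive part~a) by applying Theorem~\ref{strength-Plachky-Steinebach} at a suitable point $t_z$, and part~b) by a convex-analytic study of the map $g(z):=\overline{L}(t_z^+)-t_zz$. Recall that $\overline{L}$ is convex (an upper limit of convex functions), so ${\overline{L}'}_{\mid ]\lambda,\lambda+\varepsilon[}$ is continuous and non-decreasing, with $\overline{L}'(t)\to\overline{L}_r'(\lambda^+)$ as $t\downarrow\lambda$ and $\overline{L}'(t)\to\overline{L}_l'(\lambda+\varepsilon)$ as $t\uparrow\lambda+\varepsilon$. By the intermediate value theorem, every $z\in\ ]\overline{L}_r'(\lambda^+),\overline{L}_l'(\lambda+\varepsilon)[$ is of the form $z=\overline{L}'(t_z)=\overline{L}_r'(t_z^+)$ for some $t_z\in\ ]\lambda,\lambda+\varepsilon[$, while $t_z=\lambda$ serves for $z=\overline{L}_r'(\lambda^+)$; two admissible values of $t_z$ for the same $z$ lie in a common affine segment of $\overline{L}$, so $\overline{L}(t_z^+)-t_zz$ does not depend on the choice. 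It then remains to check that $t_z$ inherits the hypothesis of Theorem~\ref{strength-Plachky-Steinebach} (with $t_z$ in place of $\lambda$). If $z=-\infty$, then $t_z=\lambda=0$ by Lemma~\ref{improper-L} and the hypothesis is exactly the assumed right continuity of $\overline{L}$ at zero. If $z>-\infty$, one must produce a sequence $t_i\downarrow\tilde{t_z}$ fulfilling~(\ref{introduction-eq-60}) at $t_z$, namely $z<\overline{L}_r'(t_i)<+\infty$ and $\overline{L}_r'(t_i)\to z$, where $\tilde{t_z}=\sup\{t>t_z:\overline{L}_{\mid ]t_z,t]}\ \textnormal{is affine}\}$ (\cf Lemma~\ref{existence L on t}): here the assumption that $\sup{\overline{L}'}_{\mid ]\lambda,\lambda+\varepsilon[}$ is not a maximum forces $\tilde{t_z}<\lambda+\varepsilon$ (otherwise $\overline{L}$ would be affine of slope $z$ on $]t_z,\lambda+\varepsilon[$ and $\overline{L}'$ would attain the value $z=\sup{\overline{L}'}_{\mid ]\lambda,\lambda+\varepsilon[}$), and then, as $\overline{L}'$ is not constant on $]t_z,t]$ for any $t>\tilde{t_z}$ (by maximality of $\tilde{t_z}$) while being continuous on $]\lambda,\lambda+\varepsilon[$ with $\overline{L}'(\tilde{t_z})=z$, one picks $t_i\in\ ]\tilde{t_z},\tilde{t_z}+1/i[$ with $\overline{L}'(t_i)>z$ and concludes by a squeeze. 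Theorem~\ref{strength-Plachky-Steinebach} at $t_z$ then gives every displayed equality of a), the common value being $\overline{L}(t_z^+)-t_z\overline{L}_r'(t_z^+)=\overline{L}(t_z^+)-t_zz$, and its ``Furthermore'' clause (whose $\tilde{\lambda}$ is here $\tilde{t_z}$) gives the strengthening to genuine limits.

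For part~b), by a) the map $g(z)=\overline{L}(t_z^+)-t_zz$ equals $-\overline{L}^*(z)=\inf_{t>0}\{\overline{L}(t)-tz\}$ for $z\neq-\infty$ and $0$ for $z=-\infty$. Since $\overline{L}(0)=0$ forces $\overline{L}^*\ge 0$ we get $g\le 0$, and $g>-\infty$ since $\overline{L}(t_z^+)$ is finite (\cf Lemma~\ref{improper-L}; when $\lambda=0$ and $\overline{L}_r'(0^+)=-\infty$ one invokes the right-continuity assumption, whence $g=\overline{L}(0^+)=0$). For strict monotonicity, for $z_1<z_2$ one has $g(z_1)=\overline{L}(t_{z_1}^+)-t_{z_1}z_1\ge\overline{L}(t_{z_1}^+)-t_{z_1}z_2\ge\inf_{t>0}\{\overline{L}(t)-tz_2\}=g(z_2)$ (the steps using $t_{z_1}\ge0$), strict as soon as $t_{z_1}>0$; in the only remaining case $\lambda=0$, $z_1=\overline{L}_r'(0^+)$, $t_{z_1}=0$, an equality $g(z_1)=g(z_2)$ would give $\overline{L}(t)\ge\overline{L}(0^+)+z_2t$ for all $t>0$, hence (dividing by $t$ and letting $t\downarrow 0$) $\overline{L}_r'(0^+)\ge z_2>z_1=\overline{L}_r'(0^+)$, which is impossible. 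Continuity holds on $]\overline{L}_r'(\lambda^+),\overline{L}_l'(\lambda+\varepsilon)[$ (a finite convex function is continuous there), at the endpoint $\overline{L}_r'(\lambda^+)$ (the lower semicontinuous convex function $\overline{L}^*$ is right continuous at the left end of its effective domain), and at $-\infty$ when present (right continuity of $\overline{L}$ at zero gives $g(z)\to 0$ as $z\to-\infty$). The map is unbounded below precisely when $\overline{L}_l'(\lambda+\varepsilon)=+\infty$: then $g(z)\le\overline{L}(t_0)-t_0z\to-\infty$ for any fixed $t_0\in\ ]\lambda,\lambda+\varepsilon[$, whereas if $\overline{L}_l'(\lambda+\varepsilon)<+\infty$ then $\overline{L}((\lambda+\varepsilon)^-)$ is finite and the monotone $g$ tends to $\overline{L}((\lambda+\varepsilon)^-)-(\lambda+\varepsilon)\overline{L}_l'(\lambda+\varepsilon)>-\infty$ at the right end. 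Finally $g(\overline{L}_r'(\lambda^+))=0$ holds exactly when the line of slope $\overline{L}_r'(\lambda^+)$ through $(\lambda,\overline{L}(\lambda^+))$ passes through the origin and minorizes $\overline{L}$: for $\lambda=0$ this reads $\overline{L}(0^+)=0$, \ie right continuity at zero; for $\lambda>0$, two-point contact of the convex $\overline{L}$ with that line forces $\overline{L}$ linear on $[0,\lambda]$, whence $\overline{L}_l'(\lambda)=\overline{L}_r'(\lambda)$, \ie differentiability at $\lambda$, and conversely these two conditions give $g(\overline{L}_r'(\lambda^+))=\overline{L}(\lambda)-\lambda\overline{L}_r'(\lambda)=0$.

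For the last assertions of b): the restriction of $g$ to $]\overline{L}_r'(\lambda^+),\overline{L}_l'(\lambda+\varepsilon)[$ is strictly concave because every such $z$ has $t_z\in\ ]\lambda,\lambda+\varepsilon[$, where $\overline{L}$ is differentiable, so that $\partial\overline{L}(t_z)=\{z\}$; an affine piece of $\overline{L}^*$ over a subinterval of $]\overline{L}_r'(\lambda^+),\overline{L}_l'(\lambda+\varepsilon)[$ would require some such $t_z$ to be a corner of $\overline{L}$, so $\overline{L}^*$ is strictly convex there and $g$ strictly concave. If moreover $\overline{L}_{\mid ]\tilde{\lambda},\lambda+\varepsilon[}$ is strictly convex, then ${\overline{L}'}_{\mid ]\tilde{\lambda},\lambda+\varepsilon[}$ is strictly increasing while $\overline{L}'\equiv\overline{L}_r'(\lambda^+)$ on $]\lambda,\tilde{\lambda}[$, so the maximizer $t_z$ is unique for every $z\in\ ]\overline{L}_r'(\lambda^+),\overline{L}_l'(\lambda+\varepsilon)[$; this is the standard criterion for $\overline{L}^*$, hence $g$, to be differentiable there.

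The main obstacle will be the transfer, in part~a), of the hypothesis of Theorem~\ref{strength-Plachky-Steinebach} to the arbitrary point $t_z$: from the single global assumption that $\sup{\overline{L}'}_{\mid ]\lambda,\lambda+\varepsilon[}$ is not attained one must locate the maximal affine segment $]t_z,\tilde{t_z}]$, show $\tilde{t_z}<\lambda+\varepsilon$, and extract an approximating sequence of finite right-derivative values just to the right of $\tilde{t_z}$. The rest is routine convex analysis, provided the boundary cases $z=-\infty$ and $z=\overline{L}_r'(\lambda^+)$ — in which $t_z=\lambda$, possibly with $\lambda=0$ and $\overline{L}(0^+)$ strictly negative — are carried along separately throughout both a) and b).
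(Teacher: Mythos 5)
Your proposal is correct and follows essentially the same route as the paper: part a) is obtained, exactly as in the paper's proof, by showing that the non-attainment of $\sup\left\{\overline{L}'(t): t\in\ ]\lambda,\lambda+\varepsilon[\right\}$ forces ${\tilde{t_z}}<\lambda+\varepsilon$ and yields right-derivative values strictly above $z$ accumulating at $z$ just to the right of ${\tilde{t_z}}$ (\ie case $(i)$ of Lemma \ref{existence L on t}$b)$, hence the hypothesis of Theorem \ref{strength-Plachky-Steinebach} at $t_z$, with the case $z=-\infty$ handled by the assumed right continuity at zero), and then applying Theorem \ref{strength-Plachky-Steinebach} at $t_z$ together with its ``Furthermore'' clause. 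For part b) the paper simply invokes Lemma \ref{lemma-case-differentiable}$b)$; your direct convex-analytic verification of monotonicity, continuity, the vanishing and unboundedness criteria, strict concavity and differentiability of $z\mapsto-\overline{L}^*(z)$ reproduces the content and the proof ideas of that lemma rather than citing it, which is a legitimate (if slightly longer) way to the same end.
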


             \begin{remark}\label{general hypothesis  theo}
        We have     $\overline{L}_r'(\lambda^+)=-\infty$ only if $\lambda=0$, in which case 
                    the  hypothesis of right continuity of $\overline{L}$ at zero in  Theorem \ref{strength-Plachky-Steinebach} (and Corollary \ref{strength-Plachky-Steinebach-case-a)}) cannot be removed; indeed, suppose that $\overline{L}(0^+)\neq 0$, and let $c\in\ ]0,1[$. 
    Since for each index $\alpha$ the  measure $\mu_\alpha$ is tight,   there exists  a net $(x_\alpha)$ of real numbers converging to $-\infty$ such that  $\mu_\alpha([x_\alpha,-x_\alpha])>c$, hence  
     \[\lim_\alpha c_\alpha\log\mu_\alpha([x_\alpha,+\infty[)=0\neq\overline{L}(0^+),\]
 and the conclusion  does not hold with $(x_\alpha)$. The hypothesis for the case  $\overline{L}_r'(\lambda^+)>-\infty$ also holds  when   $\overline{L}_r'(\lambda^+)=-\infty$ (\cf Lemma \ref{existence L on t}$b)$).
  \end{remark}

\begin{remark}\label{vanishing-limit}
The upper limits in Theorem \ref{strength-Plachky-Steinebach} vanish if and only if one of the following cases occurs:
\begin{itemize}
\item $\lambda>0$, $\overline{L}$ is differentiable at $\lambda$  and  linear on $[0,\lambda]$;
\item $\lambda=0$ and $\overline{L}$  is right continuous at zero.
\end{itemize}
\end{remark}

\begin{remark}\label{validity of lim_alpha L*(x_alpha)}
Under the hypotheses of Theorem \ref{strength-Plachky-Steinebach},
 the equality  $\overline{L}^*(\overline{L}_r'(\lambda^+))=\lim_\alpha \overline{L}^*(x_\alpha)$   fails when eventually 
$x_\alpha$  does not belong to  the effective domain of $\overline{L}^*$, which happens if and only if $L_{\mid ]-\infty,\lambda[}$ is linear  with slope $\overline{L}_r'(\lambda^+)$ (equivalently,  $\overline{L}_r'(\lambda^+)$ is the left end-point of the effective domain of $\overline{L}^*$)
 and eventually $x_\alpha<\overline{L}_r'(\lambda^+)$. 
\end{remark}

      \begin{remark}\label{remark-difference lambda-positive-vs-lambda-zero}
      Regarding the proof as well as the wording of Theorem \ref{strength-Plachky-Steinebach},  the main difference between the cases $\lambda>0$ and $\lambda=0$  stems from the fact that  when  the map $\overline{L}_{\mid [0,+\infty[}$ is proper (which is implied by the hypotheses, \cf Lemma \ref{existence L on t}),  $0$ is the only  nonnegative real number $\lambda$ in the effective domain of $\overline{L}_{\mid [0,+\infty[}$ for which: 
  \begin{itemize}
  \item   $\overline{L}'_r(\lambda)$ or  $\overline{L}'_r(\lambda^+)$  may take the value $-\infty$;
    \item  $\overline{L}'_r(\lambda)$ may differ from  $\overline{L}'_r(\lambda^+)$;
    \item $\overline{L}'_r(\lambda^+)<+\infty$ and $\overline{L}$ may be right discontinuous at $\lambda$. 
      \end{itemize} (\cf Lemma \ref{improper-L} and Lemma  \ref{right-continuity of right derivative}; for instance,   the conclusion  in the  last assertion of Lemma \ref{improper-L} is not  true  for $\lambda=0$). Theorem \ref{strength-Plachky-Steinebach} and Corollary \ref{strength-Plachky-Steinebach-case-a)}
 include the  case $\lambda=0$ and  $\overline{L}'_r(0)=-\infty<\overline{L}'_r(0^+)$, which implies the  left  and right discontinuities   of  $\overline{L}$ at zero (\cf Lemma \ref{right-continuity of right derivative}); the case   $\lambda=0$ and     $\overline{L}'_r(0^+)=-\infty$  implies the left discontinuity  of  $\overline{L}$ at zero (in both cases, we have  $\overline{L}_{]-\infty,0[}\equiv+\infty$).
\end{remark}

\begin{remark}\label{remark G-E theo}
The standard version of   G\"{a}rtner-Ellis theorem   is unworkable    when $\overline{L}$ is not differentiable at $\lambda$: Indeed, 
  the  main hypothesis (\ie essentially smoothness)  implies the differentiability of $\overline{L}$ on the interior of its effective domain
   (beside requiring    the  existence of $L$ on an open interval containing  $0$); 
    the same  applies to the  variant of   G\"{a}rtner-Ellis theorem given by Theorem 5.1 of \cite{O'Brien(1995)SPA57} (which allows  $L$ to exists only on some open  interval not necessarily containing $0$) since it   also requires  the essential smoothness. 
    Corollary 1 of  \cite{Comman(2009)CM1}  strengthens both above versions, but,  although weaker than essential smoothness, the general  hypothesis   
   is still  a global condition requiring the  existence of $L$ on some open interval and 
relating the range of the one-sided  derivatives of $\overline{L}$  with the effective domain of $\overline{L}^*$,  and thus it is of no use here.
  \end{remark}

 \section{Proofs}

  Recall that $\overline{L}$ is a  $[-\infty,+\infty]$-valued convex function (\cite{Dembo-Zeitouni}, \cite{Comman(2007)STAPRO77}),  and such a function is said to be proper when it is $]-\infty,+\infty]$-valued and takes at least one finite value; note that 
   $\overline{L}(0)=0$.  We denote by $\overline{L}^*$ and $\overline{L}_r'$ (resp. $\overline{L}_l'$) respectively   the Legendre-Fenchel transform and right (resp. left) derivative map  of $\overline{L}$, where for each $t\ge 0$ we put  $\overline{L}_r'(t)=+\infty$ (resp. $\overline{L}_r'(t)=-\infty$) when  $\overline{L}(s)=+\infty$  for all $s>t$ (resp. $\overline{L}(s)=-\infty$ for some   $s>t$); note that $\overline{L}^*$ is $[0,+\infty]$-valued since $\overline{L}(0)=0$.
   The basic properties of the map ${\overline{L}'_r}_{\mid ]0,+\infty[}$ are summarized in  Lemmas \ref{existence L on t}$a)$,  Lemma    \ref{improper-L} and  Lemma \ref{right-continuity of right derivative}; 
    in particular,     ${\overline{L}'_r}_{\mid ]0,+\infty[}$  is  non-decreasing so that  the quantity  $\overline{L}'_r(\lambda^+)=\lim_{t\rightarrow\lambda,t>\lambda} \overline{L}'_r(t)$ is well-defined for all $\lambda\ge 0$.
 
 Let $\lambda\ge 0$.   All the lemmas below hold for any net $(\mu_\alpha,c_\alpha)$ as in (\ref{introduction-eq30}); the hypotheses of Theorem \ref{strength-Plachky-Steinebach} are  made only   in the last part of the proof.

\begin{lemma}\label{existence L on t}
\begin{nitemize}
\item[$a)$]The map  $\overline{L}_{\mid [0,+\infty[}$ is improper if and only if  there exists $T\in\ ]0,+\infty]$  such   that  $\overline{L}_{\mid ]0,T[}={\overline{L}'_r}_{\mid [0,T[}\equiv-\infty$ and $\overline{L}_{\mid ]T,+\infty[}={\overline{L}'_r}_{\mid [T,+\infty[}\equiv+\infty$. 
\item[$b)$] If  $\overline{L}_{\mid [0,+\infty[}$ is proper, then 
one and only one   of the following cases holds:
\begin{itemize}
\item[$(i)$] There exists a sequence $\left(\overline{L}'_r(\lambda_i)\right)$ in $\left]\overline{L}'_r(\lambda^+),+\infty\right[$ converging to 
$\overline{L}'_r(\lambda^+)$.
\item[$(ii)$] There exists $\varepsilon>0$ such that  $\overline{L}_{\mid ]\lambda,\lambda+\varepsilon]}$ is affine and $\overline{L}'_r(\lambda^+)<\overline{L}'_r(\lambda+\varepsilon)<+\infty$. 
\item[$(iii)$] $\overline{L}_{\mid ]\lambda,+\infty[}$ is affine.
\item[$(iv)$] $\overline{L}(\lambda)\in\R$ and $\overline{L}(t)=+\infty$ for all $t>\lambda$;
\item[$(v)$] $\overline{L}(t)=+\infty$ for all $t\ge\lambda$.
\end{itemize}
In particular,  $(i)$ or  $(ii)$ or   $(iii)$ holds if and only if  there exists $\varepsilon>0$ such that $\overline{L}_{\mid [\lambda,\lambda+\varepsilon]}$ is real-valued, bounded,   and  $\overline{L}_{\mid ]\lambda,\lambda+\varepsilon]}$  (resp. $\overline{L}_{\mid [\lambda,\lambda+\varepsilon]}$  when $\lambda>0$) continuous.
Furthermore,  $(i)$ holds if and only if there exists ${\tilde{\lambda}}\in [\lambda,+\infty[$ fulfilling 
  $\overline{L}'_r(\lambda^+)=\overline{L}'_r({\tilde{\lambda}}^+)$ and $\overline{L}'_r(t)>\overline{L}'_r(\lambda^+)$ for all $t>{\tilde{\lambda}}$; such a ${\tilde{\lambda}}$ is unique and given by
  \[{\tilde{\lambda}}=\sup\{t>\lambda: \overline{L}_{\mid ]\lambda,t]}\ \textit{is affine}\}=\lim_i\lambda_i\] for every  sequence $(\lambda_i)$ as in $(i)$.  
 \end{nitemize}
\end{lemma}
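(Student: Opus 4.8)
The plan is to derive everything from two standard properties of the $[-\infty,+\infty]$-valued convex function $\overline{L}$: the segment inequality $\overline{L}(t)\le(1-\theta)\overline{L}(0)+\theta\overline{L}(s)$ (with $\overline{L}(0)=0$) whenever $0\le t=(1-\theta)\cdot 0+\theta s$, and the fact that on the interior of its effective domain $\overline{L}'_r$ is finite-valued, non-decreasing and right-continuous. Throughout write $D=\{t\ge 0:\overline{L}(t)<+\infty\}$ for the (convex, $0$-containing) effective domain of $\overline{L}_{\mid [0,+\infty[}$ and $m=\overline{L}'_r(\lambda^+)=\inf_{t>\lambda}\overline{L}'_r(t)\in[-\infty,+\infty]$, the infimum being legitimate since $\overline{L}'_r$ is non-decreasing.

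For $a)$ the backward implication is immediate: the displayed structure makes $\overline{L}\equiv-\infty$ on the non-empty interval $]0,T[$, so $\overline{L}_{\mid [0,+\infty[}$ is not $]-\infty,+\infty]$-valued. For the forward implication, assume $\overline{L}(s_0)=-\infty$; then $s_0>0$ because $\overline{L}(0)=0$, and the segment inequality along $[0,s_0]$ gives $\overline{L}\equiv-\infty$ on $]0,s_0[$. Put $T=\sup\{s\ge 0:\overline{L}(s)=-\infty\}\in\ ]0,+\infty]$. For $t\in\ ]0,T[$ pick $s\in\ ]t,T]$ with $\overline{L}(s)=-\infty$ and apply the segment inequality along $[0,s]$ to get $\overline{L}(t)=-\infty$; thus $\overline{L}\equiv-\infty$ on $]0,T[$. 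If some $s_1>T$ had $\overline{L}(s_1)<+\infty$ (hence $\in\R$, as $s_1>T$), then the segment inequality along $[t_0,s_1]$ with any $t_0\in\ ]0,T[$ would force $\overline{L}\equiv-\infty$ on $]t_0,s_1[$, which contains points $>T$, contradicting the definition of $T$; hence $\overline{L}\equiv+\infty$ on $]T,+\infty[$. The statements on $\overline{L}'_r$ are then immediate from its definition: for $t\in[0,T[$ some $s>t$ has $\overline{L}(s)=-\infty$, so $\overline{L}'_r(t)=-\infty$, while for $t\ge T$ one has $\overline{L}(s)=+\infty$ for all $s>t$, so $\overline{L}'_r(t)=+\infty$ (and $T$ is the only possible value).

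For $b)$, assume $\overline{L}_{\mid [0,+\infty[}$ proper. If $\overline{L}_{\mid ]\lambda,+\infty[}\equiv+\infty$, then $\overline{L}(\lambda)\in\R$ or $\overline{L}(\lambda)=+\infty$ (properness rules out $-\infty$), which is case $(iv)$ or $(v)$; these are disjoint from one another and from $(i)$–$(iii)$, the latter requiring $\overline{L}$ finite somewhere on $]\lambda,+\infty[$. Otherwise fix $t_1>\lambda$ with $\overline{L}(t_1)<+\infty$. Since $0,t_1\in D$ and $0\le\lambda<t_1$ we have $\lambda\in D$, hence $\overline{L}(\lambda)\in\R$ by properness; moreover $\overline{L}$ is real-valued on $]\lambda,T^*[$, where $T^*=\sup(D\cap\ ]\lambda,+\infty[)>\lambda$, and $D\supseteq[0,T^*[$, so $]\lambda,T^*[\subseteq\mathrm{int}\,D$, where $\overline{L}'_r$ is finite, non-decreasing, right-continuous, with infimum $m$. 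Set $\tilde{\lambda}=\sup\{t\in\ ]\lambda,T^*[:\overline{L}'_r\equiv m\text{ on }]\lambda,t[\}$ (read as $\lambda$ when this set is empty); note $\overline{L}_{\mid ]\lambda,t]}$ is affine iff $\overline{L}'_r\equiv m$ on $]\lambda,t[$, since then $\overline{L}$ is affine of slope $m$ on $]\lambda,t[$ and continuous at $t\in\mathrm{int}\,D$. If $\tilde{\lambda}=T^*$, then $\overline{L}$ is affine of slope $m$ on $]\lambda,T^*[$ and $\equiv+\infty$ past $T^*$, i.e. $\overline{L}_{\mid ]\lambda,+\infty[}$ is affine — case $(iii)$. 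If $\tilde{\lambda}<T^*$, maximality forces $\overline{L}'_r(t)>m$ for every $t>\tilde{\lambda}$ (including $t\ge T^*$, where $\overline{L}'_r=+\infty$), and right-continuity gives $\overline{L}'_r(\tilde{\lambda})=\overline{L}'_r(\tilde{\lambda}^+)$: when this equals $m$, the values $\overline{L}'_r(t)$ for $t\downarrow\tilde{\lambda}$ decrease to $m$ while staying finite and $>m$, yielding a sequence as in $(i)$ with limit $\tilde{\lambda}$, and here $\{t>\lambda:\overline{L}_{\mid ]\lambda,t]}\text{ affine}\}=\ ]\lambda,\tilde{\lambda}]$, giving the stated formula; when $\overline{L}'_r(\tilde{\lambda}^+)>m$ (forcing $\tilde{\lambda}>\lambda$, since $\overline{L}'_r(\lambda^+)=m$), $\overline{L}$ is affine of slope $m$ on $]\lambda,\tilde{\lambda}]$ with $m<\overline{L}'_r(\tilde{\lambda})<+\infty$ — case $(ii)$. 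Mutual exclusivity of $(i)$–$(iii)$ is read off the range of $\overline{L}'_r$ on $]\lambda,+\infty[$: accumulation at $m$ from above in $(i)$, a jump from $m$ straight to a strictly larger value in $(ii)$, the two-point set $\{m,+\infty\}$ in $(iii)$. The supplementary assertions follow: $(i)\vee(ii)\vee(iii)$ is exactly the negation of $\overline{L}_{\mid ]\lambda,+\infty[}\equiv+\infty$, equivalently $\overline{L}$ real-valued on some $[\lambda,\lambda+\varepsilon]$ (shrink $\varepsilon$), where a finite convex function is automatically bounded and continuous away from the right endpoint; $(i)$ is equivalent to the existence of $\tilde{\lambda}\in[\lambda,+\infty[$ with $\overline{L}'_r(\tilde{\lambda}^+)=\overline{L}'_r(\lambda^+)$ and $\overline{L}'_r(t)>\overline{L}'_r(\lambda^+)$ for $t>\tilde{\lambda}$, such $\tilde{\lambda}$ being unique since two of them would contradict the monotonicity of $\overline{L}'_r$ (via right-continuity); and any sequence as in $(i)$ converges to $\tilde{\lambda}$, since a subsequence bounded away from $\tilde{\lambda}$ would keep $\overline{L}'_r(\lambda_i)$ bounded away from $m$.

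The main obstacle is the bookkeeping needed to see that the five cases are simultaneously exhaustive and mutually exclusive, rather than any individual estimate; the one genuine subtlety is to notice that the whole discussion of $b)$ takes place on the open interval $]\lambda,T^*[\subseteq\mathrm{int}\,D$, where $\overline{L}'_r$ is right-continuous and finite, so the pathologies specific to $\lambda=0$ (possible right-discontinuity of $\overline{L}$ at $0$, and $\overline{L}'_r(0)\neq\overline{L}'_r(0^+)$) never intervene, being absorbed into the definition $m=\overline{L}'_r(\lambda^+)=\lim_{t\to\lambda^+}\overline{L}'_r(t)$; a further point to pin down is the reading of ``affine'' for a $[-\infty,+\infty]$-valued function in case $(iii)$ when $T^*<+\infty$, namely affine in the usual sense on its effective domain and $+\infty$ outside.
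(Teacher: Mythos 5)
Your proof is correct and follows essentially the same route as the paper's: convexity of $\overline{L}$ with $\overline{L}(0)=0$, plus the finiteness, monotonicity and right-continuity of $\overline{L}'_r$ on the interior of the effective domain, organized around the threshold $\tilde{\lambda}$; your explicit treatment of part $a)$, of mutual exclusivity, and of the loose reading of ``affine'' in case $(iii)$ fills in details the paper leaves implicit. The only slip is the phrase ``continuous away from the right endpoint'': after shrinking $\varepsilon$ the right endpoint lies in the interior of the domain, and the endpoint where continuity can fail is the left one (namely $\lambda=0$), which is exactly why the statement only claims continuity of $\overline{L}_{\mid ]\lambda,\lambda+\varepsilon]}$ in general and of $\overline{L}_{\mid [\lambda,\lambda+\varepsilon]}$ only when $\lambda>0$.
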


\begin{proof}
$a)$ It is a direct consequence of the definitions together with the convexity of $\overline{L}$ and  the fact that $\overline{L}(0)=0$. 

$b)$ Assume that $\overline{L}_{\mid [0,+\infty[}$ is proper. 
If $\overline{L}(\lambda)=+\infty$, then    $\overline{L}(t)=+\infty$ for all $t\ge\lambda$ (because $\overline{L}$ is   convex and $\overline{L}(0)=0$) and $(v)$ holds.
Assume  now that  $\overline{L}(\lambda)<+\infty$. Then 
 $\overline{L}(\lambda)\in\R$ because   $\overline{L}_{\mid [0,+\infty[}$ is proper.
Since $\overline{L}$ is convex and $\overline{L}(0)=0$,  we have  either $\overline{L}(t)=+\infty$ for all $t>\lambda$ and $(iv)$ holds, or  
 $\overline{L}_{\mid [\lambda,\lambda+\delta[}$ is  real-valued for some $\delta>0$,  in which  case 
 $\overline{L}'_r(t)\in\R$ for all $t\in\ ]\lambda,\lambda+\delta[$.  
Assume that $(i)$ does not hold, \ie 
 $\overline{L}'_r(\lambda^+)<\inf\left\{\overline{L}'_r(t)\in\ \left]\overline{L}'_r(\lambda^+),+\infty\right[:t>\lambda\right\}$.
  If $\left\{\overline{L}'_r(t)\in\ \left]\overline{L}'_r(\lambda^+),+\infty\right[:t>\lambda\right\}=\emptyset$, then   $\overline{L}'_r(t)=\overline{L}'_r(\lambda^+)$  for all $t>\lambda$,  and $(iii)$ holds.  If $\left\{\overline{L}'_r(t)\in\ \left]\overline{L}'_r(\lambda^+),+\infty\right[:t>\lambda\right\}\neq\emptyset$, then  
  there exists $\varepsilon\in\ ]0,\delta[$ such that $\overline{L}'_r(\lambda^+)<\overline{L}'_r(\lambda+\varepsilon)$ and 
 $\overline{L}'_r(\lambda^+)=\overline{L}'_r(t)$ for all $t\in\ ]\lambda,\lambda+\varepsilon[$, so that   $(ii)$ holds. The first assertion is proved.

 Assume that either $(i)$,  $(ii)$, or $(iii)$ holds.  There exists $\varepsilon>0$ such that
 ${\overline{L}'_r}_{\mid ]\lambda,\lambda+\varepsilon[}$ is real-valued,  hence $\overline{L}_{\mid [\lambda,\lambda+\varepsilon[}$  is real-valued. 
 The map 
   $\overline{L}_{\mid ]\lambda,\lambda+\varepsilon/2]}$   (resp. $\overline{L}_{\mid [\lambda,\lambda+\varepsilon/2]}$ when $\lambda>0$) is continuous since  $]\lambda,\lambda+\varepsilon/2]$  (resp. $[\lambda,\lambda+\varepsilon/2]$ when $\lambda>0$)
 belongs to the interior of the effective domain of $\overline{L}_{\mid[0,+\infty[}$. 
 The boundness of
 $\overline{L}_{\mid [\lambda,\lambda+\varepsilon/2]}$  follows from the continuity  when $\lambda>0$, and the  boundness of
 $\overline{L}_{\mid ]\lambda,\lambda+\varepsilon/2]}$  when $\lambda=0$ follows 
 from the convexity and the fact that $\overline{L}(0)=0$. 
 Conversely, if  $\overline{L}_{\mid [\lambda,\lambda+\varepsilon[}$ is real-valued for some $\varepsilon>0$, then the first assertion 
 implies that 
 either $(i)$,  $(ii)$, or $(iii)$ holds. 
  The second assertion is proved. 
 
 Assume that there exists 
    $s\in[\lambda,+\infty[$ fufilling $\overline{L}'_r(\lambda^+)=\overline{L}'_r(s^+)$ and $\overline{L}'_r(t)>\overline{L}'_r(\lambda^+)$ for all $t>s$. 
     We have   $\overline{L}'_r(\lambda^+)=\lim_{t\rightarrow s,t>s}\overline{L}'_r(t)$ 
    hence $(i)$ holds.

 Assume that $(i)$ holds.  Put ${\tilde{\lambda}}=\sup\{t>\lambda: \overline{L}_{\mid ]\lambda,t]}\ \textnormal{is affine}\}$.
 If  ${\tilde{\lambda}}=+\infty$, then $\overline{L}'_r(t)=\overline{L}'_r(\lambda^+)$ for all $t>\lambda$, which contradicts $(i)$, hence  ${\tilde{\lambda}}\in[\lambda,+\infty[$.  
  Let $t>{\tilde{\lambda}}$. Since $\overline{L}_{\mid ]\lambda,t]}$ is not affine, we have $\overline{L}'_r(t)>\overline{L}'_r(\lambda^+)$.
  Suppose that $\overline{L}'_r(\lambda^+)<\overline{L}'_r({\tilde{\lambda}}^+)$; by   $(i)$ there exists $\overline{L}'_r(\lambda_i)$ such that \[\overline{L}'_r(\lambda^+)<\overline{L}'_r(\lambda_i)<\overline{L}'_r({\tilde{\lambda}}^+),\] which implies $\lambda<\lambda_i<{\tilde{\lambda}}$  and 
   contradicts the fact that $\overline{L}_{]\lambda,{\tilde{\lambda}}[}$ is affine; therefore, $\overline{L}'_r(\lambda^+)=\overline{L}'_r({\tilde{\lambda}}^+)$.

Let $(\lambda_i)$ be a sequence as in $(i)$. For each $\varepsilon>0$  we have eventually \[\overline{L}'_r(\lambda^+)=\overline{L}'_r({\tilde{\lambda}}^+)<\overline{L}'_r(\lambda_i)<\overline{L}'_r({\tilde{\lambda}}+\varepsilon)\]
hence eventually 
${\tilde{\lambda}}<\lambda_i<{\tilde{\lambda}}+\varepsilon$, which implies
 ${\tilde{\lambda}}=\lim_i \lambda_i$. 
 
  The equalities  $\overline{L}'_r(\lambda^+)=\overline{L}'_r({\tilde{\lambda}}^+)=\overline{L}'_r(s^+)$ implies $\overline{L}_{\mid]\lambda,s]}$ affine, hence $s\le {\tilde{\lambda}}$ by definition of ${\tilde{\lambda}}$. Since $\overline{L}'_r(t)>\overline{L}'_r(\lambda^+)=\overline{L}'_r({\tilde{\lambda}}^+)$ for all $t>s$, we have $\overline{L}'_r(s^+)\ge\overline{L}'_r({\tilde{\lambda}}^+)$ hence $s\ge {\tilde{\lambda}}$; 
therefore, $s={\tilde{\lambda}}$. The proof of the third assertion is complete. 
  \end{proof}

\begin{lemma}\label{improper-L}
The following statements are equivalent:
\begin{itemize}
\item[$(i)$]   $\overline{L}_{\mid [0,+\infty[}$ is proper;
\item[$(ii)$]  ${\overline{L}'_r}_{\mid ]0,+\infty[}$ is $]-\infty,+\infty]$-valued, non-decreasing  and right continuous. 
 \end{itemize}
When  the above holds and  $\lambda>0$,   we have    ${\overline{L}'_r}(\lambda)\in\R$   if and only if  $\overline{L}(t)<+\infty$ for some $t>\lambda$.
\end{lemma}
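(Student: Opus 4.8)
The plan is to read the whole statement off the elementary one‑variable theory of convex functions, the only care being the $\pm\infty$ conventions fixed for $\overline{L}'_r$ in the preamble. First I would record the harmless reformulation that, since $\overline{L}(0)=0$ is finite, ``$\overline{L}_{\mid[0,+\infty[}$ is proper'' means exactly ``$\overline{L}(t)>-\infty$ for every $t\ge 0$''.

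For $(i)\Rightarrow(ii)$, fix $t>0$. If $\overline{L}(t)=+\infty$, then $\overline{L}$ being convex with $\overline{L}(0)=0$ forces $\overline{L}(s)=+\infty$ for all $s\ge t$, so $\overline{L}'_r(t)=+\infty$ by convention. If $\overline{L}(t)\in\R$, the difference quotients $s\mapsto(\overline{L}(s)-\overline{L}(t))/(s-t)$ for $s>t$ are non‑increasing as $s\downarrow t$ and bounded below by the slope $\overline{L}(t)/t\in\R$ between $0$ and $t$; hence their limit $\overline{L}'_r(t)$ lies in $]-\infty,+\infty]$, and it equals $+\infty$ precisely when $\overline{L}(s)=+\infty$ for all $s>t$, in agreement with the convention. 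Monotonicity of $\overline{L}'_r$ on $]0,+\infty[$ is the usual ordering of one‑sided slopes of a convex function, which passes trivially through the value $+\infty$. For right continuity I set $T=\sup\{t\ge 0:\overline{L}(t)<+\infty\}\in[0,+\infty]$: on $]0,T[$ (possibly empty), which is contained in the interior of the effective domain, $\overline{L}'_r$ is real‑valued and right continuous by the classical fact for proper convex functions, while on $[T,+\infty[$ the convention forces $\overline{L}'_r\equiv+\infty$, so $\overline{L}'_r$ is right continuous there and also at $T$ (both $\lim_{s\downarrow T}\overline{L}'_r(s)$ and $\overline{L}'_r(T)$ equal $+\infty$). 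Since $]0,+\infty[\subseteq\ ]0,T[\ \cup\ [T,+\infty[$, this gives $(ii)$.

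For $(ii)\Rightarrow(i)$ I would argue contrapositively: if $\overline{L}_{\mid[0,+\infty[}$ is improper it cannot be identically $+\infty$ (as $\overline{L}(0)=0$), hence it takes the value $-\infty$ at some point, necessarily some $t_1>0$; the convention then gives $\overline{L}'_r(t)=-\infty$ for every $t\in\ ]0,t_1[\neq\emptyset$, so $\overline{L}'_r$ fails to be $]-\infty,+\infty]$‑valued on $]0,+\infty[$ and $(ii)$ fails. (Alternatively, one may simply invoke Lemma \ref{existence L on t}$a)$, which directly exhibits an interval $]0,T[$ on which $\overline{L}'_r\equiv-\infty$.)

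For the last assertion, assume $(i)$--$(ii)$ and $\lambda>0$. If $\overline{L}(t)<+\infty$ for some $t>\lambda$, properness and convexity make $\overline{L}$ real‑valued on $[0,t]$, and the monotone difference quotients sandwich $\overline{L}(\lambda)/\lambda\le\overline{L}'_r(\lambda)\le(\overline{L}(t)-\overline{L}(\lambda))/(t-\lambda)$ between two finite numbers, so $\overline{L}'_r(\lambda)\in\R$; conversely, if $\overline{L}(t)=+\infty$ for all $t>\lambda$, the convention gives $\overline{L}'_r(\lambda)=+\infty\notin\R$. The only genuinely delicate point in the argument is the right‑continuity clause of $(ii)$ at the endpoint $T$ of the effective domain, where the classical interior statement no longer applies and one must fall back on the $+\infty$ convention; everything else is bookkeeping with the conventions.
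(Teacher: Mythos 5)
Your proof is correct and follows essentially the same route as the paper's: both directions rest on convexity together with $\overline{L}(0)=0$, the $\pm\infty$ conventions for $\overline{L}'_r$, and the classical right-continuity of the right derivative of a proper convex function on the interior of its effective domain (the paper invokes Theorem 24.1 of Rockafellar after extending $\overline{L}_{\mid[\lambda,\lambda+\varepsilon]}$ by $+\infty$ to a proper lsc function, whereas you split $]0,+\infty[$ at $T=\sup\{t\ge 0:\overline{L}(t)<+\infty\}$, which amounts to the same thing). The one point to make explicit is that the ``classical fact'' on $]0,T[$ should be applied to the proper convex function $\overline{L}\,1_{[0,+\infty[}+\infty\,1_{]-\infty,0[}$ rather than to $\overline{L}$ itself, since nothing prevents $\overline{L}$ from taking the value $-\infty$ on $]-\infty,0[$ even when $\overline{L}_{\mid[0,+\infty[}$ is proper; the right derivatives on $]0,+\infty[$ are unaffected by this replacement.
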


\begin{proof}
$(ii)\Rightarrow(i)$ follows from the definitions since $\overline{L}'_r$ takes the value $-\infty$  when  $\overline{L}_{\mid [0,+\infty[}$ is improper (\cf Lemma \ref{existence L on t}).
Assume that $(i)$ holds.
Since $\overline{L}$ is convex and $\overline{L}(0)=0$, the map ${\overline{L}'_r}_{\mid ]0,+\infty[}$ is $]-\infty,+\infty]$-valued. Let $\lambda>0$. 
If $\overline{L}'_r(\lambda)=+\infty$, then $\lambda$ fulfils one of the last two cases of Lemma \ref{existence L on t},  so that 
 $\overline{L}'_r$ takes only  the value  $+\infty$ on $[\lambda,+\infty[$ hence is  non-decreasing and right continuous on
 $[\lambda,+\infty[$.
Assume that $\overline{L}'_r(\lambda)<+\infty$. Then $\lambda$  fulfils one of the first three cases of 
Lemma \ref{existence L on t}; in particular, there exists  $\varepsilon>0$ such that $\overline{L}_{\mid [\lambda,\lambda+\varepsilon]}$ is real-valued and continuous.
Extend $\overline{L}_{\mid [\lambda,\lambda+\varepsilon]}$  to a lower semi-continuous function $f$  on $\R$ by putting $f(x)=+\infty$ for all $x\in\R\setminus[\lambda,\lambda+\varepsilon]$. The right derivative map    of $f$ is non-decreasing and right continuous on $\R$ by  
  Theorem 24.1 of  \cite{Rockafellar-70}  so that ${\overline{L}'_r}$ is non-decreasing on $[\lambda,\lambda+\varepsilon]$ and right continuous on $[\lambda,\lambda+\varepsilon[$; 
  therefore, $(ii)$ holds and the first assertion is proved.  If      $\overline{L}(t)<+\infty$ for some $t>\lambda$, then  ${\overline{L}'_r}(\lambda)<+\infty$ (by definition) hence  ${\overline{L}'_r}(\lambda)\in\R$ since $\lambda>0$ and  $\overline{L}_{\mid [0,+\infty[}$ is proper; the converse is obvious.
\end{proof}

\begin{lemma}\label{right-continuity of right derivative}
  The following statements are equivalent:
  \begin{itemize}
  \item[$(i)$] $\overline{L}'_r$ is not right continuous  at $0$;
  \item[$(ii)$] $\overline{L}_{\mid [0,+\infty[}$ is not lower semi-continuous  at  $0$, $\overline{L}'_r(0)=-\infty$ and  $\overline{L}_r'(0^+)>-\infty$.
   \item[$(iii)$] $\overline{L}_{\mid [0,+\infty[}$ is not lower semi-continuous  at  $0$ and   $\overline{L}_r'(0^+)\in\R$.
      \item[$(iv)$] $\overline{L}'_r(0)=-\infty$ and  $\overline{L}_r'(0^+)>-\infty$. 
  \end{itemize}
  The above equivalence hold verbatim replacing $\overline{L}_r'(0^+)>-\infty$ by $\overline{L}_r'(0^+)\in\R$. In particular,  $\overline{L}'_r(0)\in\R$ if and only if $\overline{L}'_r(0)=\overline{L}'_r(0^+)\in\R$.
           \end{lemma}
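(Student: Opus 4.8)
\textbf{Plan for proving Lemma \ref{right-continuity of right derivative}.}
The plan is to prove the equivalences by establishing a short cycle and then reading off the remaining implications, since all four statements concern only the local behaviour of the convex map $\overline{L}_{\mid[0,+\infty[}$ near the single point $0$. The key structural fact I would invoke is that a $[-\infty,+\infty]$-valued convex function on $[0,+\infty[$ is automatically continuous (in fact $C^1$) on the interior of its effective domain, so any failure of lower semi-continuity or of differentiability must be concentrated at the boundary point $0$; combined with $\overline{L}(0)=0$, this pins down exactly what can go wrong there. I would also use that $\overline{L}'_r$ is non-decreasing on $]0,+\infty[$ (as recalled just before the lemmas, and via Lemma \ref{existence L on t}$a)$ and Lemma \ref{improper-L} to handle the improper case), so $\overline{L}'_r(0^+)=\lim_{t\downarrow 0}\overline{L}'_r(t)$ is well-defined, and the comparison $\overline{L}'_r(0)\le\overline{L}'_r(0^+)$ always holds; right continuity of $\overline{L}'_r$ at $0$ is precisely the equality $\overline{L}'_r(0)=\overline{L}'_r(0^+)$.

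\textbf{Order of steps.}
First I would dispose of the improper case: if $\overline{L}_{\mid[0,+\infty[}$ is improper then by Lemma \ref{existence L on t}$a)$ we have $\overline{L}'_r(0)=-\infty$ and either $\overline{L}'_r(0^+)=-\infty$ (if $T>0$, so $\overline{L}'_r\equiv-\infty$ near $0$) — in which case all four statements are false, consistently — so we may assume $\overline{L}_{\mid[0,+\infty[}$ is proper and use Lemma \ref{improper-L}: $\overline{L}'_r(0^+)\in\ ]-\infty,+\infty]$. Next, $(ii)\Rightarrow(iii)$ and $(iii)\Rightarrow(iv)$: the first is immediate once one notes that under proper-ness $\overline{L}'_r(0^+)>-\infty$ combined with non-lower-semicontinuity at $0$ forces $\overline{L}'_r(0^+)$ finite (if $\overline{L}'_r(0^+)=+\infty$ then $\overline{L}$ is either $+\infty$ just past $0$ or has a vertical-tangent-type behaviour which, together with $\overline{L}(0)=0$, is compatible with l.s.c.\ at $0$ — I'd check this carefully, as it is one of the two delicate points); for $(iii)\Rightarrow(iv)$, if $\overline{L}'_r(0)>-\infty$ then $\overline{L}'_r(0)\in\R$ and the affine minorant $x\mapsto\overline{L}'_r(0)x$ supports $\overline{L}$ at $0$, giving $\liminf_{x\downarrow 0}\overline{L}(x)\ge 0=\overline{L}(0)$, i.e.\ l.s.c.\ at $0$ — contradiction. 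Then $(iv)\Rightarrow(i)$ is just the definition of right discontinuity via $\overline{L}'_r(0)=-\infty<\overline{L}'_r(0^+)$. Finally $(i)\Rightarrow(ii)$: right discontinuity means $\overline{L}'_r(0)<\overline{L}'_r(0^+)$; since $\overline{L}'_r$ is non-decreasing and right continuous on $]0,+\infty[$ by Lemma \ref{improper-L}, the only possible jump is at $0$, and because $\overline{L}'_r(0)\le\overline{L}'_r(t)$ for all $t>0$ while a finite value $\overline{L}'_r(0)$ would again support $\overline{L}$ at $0$ and propagate $\overline{L}'_r(0^+)\le\overline{L}'_r(0)$-type contradictions, one gets $\overline{L}'_r(0)=-\infty$ and $\overline{L}'_r(0^+)>-\infty$; the failure of l.s.c.\ at $0$ then follows because $\overline{L}'_r(0)=-\infty$ means the chords from $0$ have slopes $\to-\infty$, so $\liminf_{x\downarrow 0}\overline{L}(x)<0=\overline{L}(0)$ (using $\overline{L}'_r(0^+)>-\infty$ to keep $\overline{L}$ finite just past $0$, so the liminf is a genuine strictly-negative real number rather than vacuous).

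\textbf{The remaining assertions.}
Once the four-way equivalence is in place, the ``verbatim'' replacement of $\overline{L}'_r(0^+)>-\infty$ by $\overline{L}'_r(0^+)\in\R$ is automatic: in each of $(ii)$--$(iv)$ we showed along the way that non-lower-semicontinuity at $0$ together with $\overline{L}'_r(0^+)>-\infty$ actually forces $\overline{L}'_r(0^+)<+\infty$ (that finiteness being exactly the first delicate point above), so the two forms of the hypothesis coincide under the other conditions. The last sentence, ``$\overline{L}'_r(0)\in\R$ iff $\overline{L}'_r(0)=\overline{L}'_r(0^+)\in\R$'', is then a contrapositive restatement: the ``if'' direction is trivial, and for ``only if'', if $\overline{L}'_r(0)\in\R$ then $\overline{L}'_r$ is not right discontinuous at $0$ by $(i)\Leftrightarrow(iv)$, i.e.\ $\overline{L}'_r(0)=\overline{L}'_r(0^+)$, which is then finite.

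\textbf{Main obstacle.}
I expect the main obstacle to be the careful handling of the case $\overline{L}'_r(0^+)=+\infty$ when checking that non-l.s.c.\ at $0$ forces finiteness of $\overline{L}'_r(0^+)$ — that is, ruling out a ``jump to $+\infty$'' scenario and confirming that such behaviour is still compatible with lower semi-continuity at $0$ given $\overline{L}(0)=0$. The second, related subtlety is keeping the book-keeping of $\pm\infty$ values consistent throughout (the convention $\overline{L}'_r(t)=\pm\infty$ from the preamble to Section 2, and the improper case handed off to Lemmas \ref{existence L on t} and \ref{improper-L}), so that each supporting-line / chord-slope argument is applied only where $\overline{L}$ is genuinely real-valued near $0^+$. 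Everything else is a routine convexity computation using $\overline{L}(0)=0$ and monotonicity of $\overline{L}'_r$.
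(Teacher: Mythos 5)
Your plan follows essentially the same route as the paper: the same cycle of implications (the paper runs $(i)\Rightarrow(ii)\Rightarrow(iii)\Rightarrow(iv)\Rightarrow(i)$, you start the cycle at $(ii)$), the same preliminary reduction to the proper case via Lemma \ref{existence L on t}$a)$ and Lemma \ref{improper-L}, and the same two key mechanisms: Rockafellar's Theorem 24.1 (equivalently your supporting-line argument) to show that lower semi-continuity at $0$ forces right continuity of $\overline{L}'_r$ there, and the convexity-plus-$\overline{L}(0)=0$ bookkeeping to pass between $\overline{L}(0^+)<0$ and $\overline{L}'_r(0)=-\infty$. Your worry about the ``vertical tangent'' case in $(ii)\Rightarrow(iii)$ resolves trivially under the paper's conventions: $\overline{L}'_r(0^+)=+\infty$ forces $\overline{L}'_r(t)=+\infty$ for all $t>0$, hence $\overline{L}\equiv+\infty$ on $]0,+\infty[$ and $\overline{L}'_r(0)=+\infty$, contradicting $\overline{L}'_r(0)=-\infty$; this is exactly how the paper dispatches it.

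There is, however, one step whose justification is invalid as written: at the end of $(i)\Rightarrow(ii)$ you infer the failure of lower semi-continuity from ``$\overline{L}'_r(0)=-\infty$ means the chords from $0$ have slopes $\to-\infty$, so $\liminf_{x\downarrow 0}\overline{L}(x)<0$''. That implication is false: for $\overline{L}(x)=-\sqrt{x}$ near $0$ (convex, $\overline{L}(0)=0$, finite just past $0$) the chord slopes tend to $-\infty$ while $\overline{L}(0^+)=0$, so $\overline{L}$ is lower semi-continuous at $0$. Your parenthetical use of $\overline{L}'_r(0^+)>-\infty$ ``to keep $\overline{L}$ finite just past $0$'' does not repair this, since finiteness holds in the counterexample too. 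The correct use of $\overline{L}'_r(0^+)>-\infty$ is as a lower bound on the derivative: if $\overline{L}(0^+)$ were $\ge 0$, then $\overline{L}(t)=\overline{L}(0^+)+\int_0^t\overline{L}'_r(s)\,ds\ge\overline{L}'_r(0^+)\,t$ for small $t>0$, whence $\overline{L}'_r(0)\ge\overline{L}'_r(0^+)>-\infty$, a contradiction; hence $\overline{L}(0^+)<0$ and lower semi-continuity fails. (The paper avoids the issue by establishing non-lower-semicontinuity \emph{first}, as the contrapositive of Theorem 24.1, and only then deducing $\overline{L}(0^+)<0$ and $\overline{L}'_r(0)=-\infty$.) With this one step repaired, the rest of your plan, including the ``verbatim'' replacement and the final sentence, goes through.
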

\begin{proof}
Assume that $(i)$ holds. By Lemma \ref{existence L on t}$a)$ the map  $\overline{L}_{\mid [0,+\infty[}$ is proper, and by Lemma  \ref{existence L on t}$b)$  there exists $\varepsilon>0$ such that $\overline{L}_{\mid [0,\varepsilon]}$ is real-valued and $\overline{L}_{\mid ]0,\varepsilon]}$ is continuous.
  If  $\overline{L}_{\mid [0,+\infty[}$ is  lower semi-continuous  at $0$, then the extension of   $\overline{L}_{\mid [0,\varepsilon]}$ to a lower semi-continuous function   on $\R$ yields the right continuity of $\overline{L}'_r$ at $0$ (by Theorem 24.1 of  \cite{Rockafellar-70}), which gives   a contradiction; therefore, $\overline{L}_{\mid [0,+\infty[}$ is not lower semi-continuous  at  $0$; since $\overline{L}$ is convex and $\overline{L}(0)=0$  it follows  that $\overline{L}(0^+)$ exists as a negative number, and consequently,   $\overline{L}_r'(0)=-\infty$.  Since $\overline{L}'_r(t)\in\R$ for all $t\in\ ]0,\varepsilon[$,  and  ${\overline{L}'_r}_{\mid ]0,+\infty[}$ is non-decreasing by Lemma \ref{improper-L}, $\overline{L}_r'(0^+)$ exists in $[-\infty,+\infty[$, hence  $\overline{L}_r'(0^+)>-\infty$ (since $\overline{L}'_r$ is not right continuous  at $0$ by hypothesis) and $(ii)$ holds. 
  
  Assume that $(ii)$ holds. Then,  $\overline{L}_r'(0^+)\in\R$ (otherwise  $\overline{L}_r'(0^+)=+\infty$ would imply $\overline{L}_r'(0)=+\infty$ and a contradiction) hence $(iii)$ holds. 
  
   Assume that $(iii)$ holds. By Lemma  \ref{existence L on t}  there exists $\varepsilon>0$ such that $\overline{L}_{\mid [0,\varepsilon]}$ is real-valued and $\overline{L}_{\mid ]0,\varepsilon]}$ is continuous, 
 hence (since $\overline{L}$ is convex and $\overline{L}(0)=0$),  $\overline{L}(0^+)$ exists as a negative number, which implies $\overline{L}_r'(0)=-\infty$, and gives $(iv)$.  Since the implication  $(iv)\Rightarrow (i)$ is obvious, the proof  of the first two assertions is complete; the last assertion is a direct consequence of them.
    \end{proof}

 Let $l_0$ be the function defined on $\R$ by
 \[\forall x\in\R,\ \ \ \ \ \ l_0(x)=-\lim_{\varepsilon\rightarrow 0,\varepsilon>0} \limsup_\alpha c_\alpha\log\mu_\alpha (]x-\varepsilon,x+\varepsilon[);\]
 note that $l_0$ is $[0,+\infty]$-valued and  lower semi-continuous.
 
 \begin{lemma}\label{l0-greater-L*}
We have 
\[\forall x\in\R,\ \ \ \ \ \ \ l_0(x)\ge\overline{L}^*(x).\]
\end{lemma}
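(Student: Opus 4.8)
The plan is to prove this inequality by the elementary exponential Markov (Chebyshev) bound, which is the classical source of the large-deviation upper bound.

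First I would record that, since $]x-\varepsilon,x+\varepsilon[$ increases with $\varepsilon$, the map $\varepsilon\mapsto\limsup_\alpha c_\alpha\log\mu_\alpha(]x-\varepsilon,x+\varepsilon[)$ is non-decreasing, so its limit at $0^+$ equals its infimum over $\varepsilon>0$; hence
\[
l_0(x)=\sup_{\varepsilon>0}\Bigl(-\limsup_\alpha c_\alpha\log\mu_\alpha(]x-\varepsilon,x+\varepsilon[)\Bigr).
\]
It therefore suffices to show that for every $t\in\R$ and every $\varepsilon>0$,
\[
-\limsup_\alpha c_\alpha\log\mu_\alpha(]x-\varepsilon,x+\varepsilon[)\ \ge\ tx-|t|\varepsilon-\overline{L}(t),
\]
for then letting $\varepsilon\to 0$ gives $l_0(x)\ge tx-\overline{L}(t)$, and taking the supremum over $t\in\R$ yields $l_0(x)\ge\overline{L}^*(x)$.

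To obtain the displayed estimate, fix $t\ge 0$ (the case $t\le 0$ being symmetric, using the half-line $]-\infty,x+\varepsilon]$ and the point $x+\varepsilon$ in place of $[x-\varepsilon,+\infty[$ and $x-\varepsilon$; the case $t=0$ is trivial since $\mu_\alpha\le 1$ and $\overline{L}(0)=0$). On $]x-\varepsilon,x+\varepsilon[$ one has $e^{c_\alpha^{-1}t(y-(x-\varepsilon))}\ge 1$, so
\[
\mu_\alpha(]x-\varepsilon,x+\varepsilon[)\ \le\ e^{-c_\alpha^{-1}t(x-\varepsilon)}\int_\R e^{c_\alpha^{-1}ty}\,\mu_\alpha(dy).
\]
Taking $c_\alpha\log(\cdot)$ and then $\limsup_\alpha$ (the term $-t(x-\varepsilon)$ being a constant), and using the definition of $\overline{L}$,
\[
\limsup_\alpha c_\alpha\log\mu_\alpha(]x-\varepsilon,x+\varepsilon[)\ \le\ -t(x-\varepsilon)+\overline{L}(t)\ =\ -tx+|t|\varepsilon+\overline{L}(t),
\]
which is exactly the required bound. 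This remains correct when $\overline{L}(t)=+\infty$ (the target inequality is then vacuous) and when $\overline{L}(t)=-\infty$ (the bound forces $l_0(x)=+\infty$, so $l_0(x)\ge\overline{L}^*(x)$ a fortiori).

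I expect no genuine obstacle: the only points requiring a little care are the monotonicity reduction expressing $l_0(x)$ as a supremum over $\varepsilon>0$, the choice of the correct half-line so that the exponential factor stays $\ge 1$ on the interval, and the bookkeeping of the $|t|\varepsilon$ term that must vanish as $\varepsilon\to 0$.
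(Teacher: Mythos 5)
Your proof is correct, and it is genuinely more self-contained than the one in the paper. The paper obtains the key inequality $\overline{L}(t)\ge tx-l_0(x)$ by invoking Theorem 1 of \cite{Comman(2007)STAPRO77} (a general variational lower bound for $\overline{L}$ in terms of $l_0$, applicable because the level sets $\{y: a\le e^{ty}\le b\}$ are compact for $t\neq 0$), treating $t=0$ separately, and then rearranging to get $l_0(x)\ge\sup_t\{tx-\overline{L}(t)\}=\overline{L}^*(x)$. You prove the very same inequality directly: the exponential Chebyshev bound $\mu_\alpha(]x-\varepsilon,x+\varepsilon[)\le e^{-c_\alpha^{-1}t(x\mp\varepsilon)}\int_\R e^{c_\alpha^{-1}ty}\mu_\alpha(dy)$ is exactly the cited variational lower bound restricted to a ball, read in the opposite direction, and your bookkeeping of the $|t|\varepsilon$ error, the monotonicity in $\varepsilon$, the choice of endpoint $x-\varepsilon$ versus $x+\varepsilon$ according to the sign of $t$, and the degenerate cases $\overline{L}(t)=\pm\infty$ and $t=0$ are all handled properly. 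What the paper's route buys is brevity and consistency with the companion Lemma \ref{expression-L-when-locally-finite}, which uses the harder part (b) of the same cited theorem and genuinely needs it; what your route buys is that this particular lemma becomes elementary and independent of the reference. Both are valid.
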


\begin{proof}
Since for each  real number $\lambda\neq 0$ the set $\{x\in\R:a\le e^{\lambda x}\le b\}$ is compact for all $(a,b)\in\R^2$ with $a\le b$, Theorem 1 of \cite{Comman(2007)STAPRO77} yields 
 \[\forall\lambda\in\R\setminus\{0\},\ \ \ \ \ \overline{L}(\lambda)\ge\sup_{y\in \R}\{\lambda y-l_0(y)\}.\]
 Since $l_0$ is a $[0,+\infty]$-valued function and $\overline{L}(0)=0$,  the above inequality is  true  with $\lambda=0$ so that 
\[\forall (\lambda,x)\in\R^2,\ \ \ \ \ \overline{L}(\lambda)-\lambda x\ge\sup_{y\in \R}\{\lambda y-l_0(y)\}-\lambda x\ge-l_0(x),\]
hence 
\[\forall x\in\R,\ \ \ \ \ \ \ l_0(x)\ge \sup_{\lambda\in\R}\{\lambda x-\overline{L}(\lambda)=\overline{L}^*(x).\]
\end{proof}

 \begin{lemma}\label{expression-L-when-locally-finite}
  Assume that  $\overline{L}_{\mid [0,+\infty[}$ is proper and  $\overline{L}(t)<+\infty$ for some $t>\lambda$.  There exists $\varepsilon>0$ such that  for each $t\in\ ]\lambda,\lambda+\varepsilon[$ we have 
 \[\overline{L}(t)=\sup_{x\in\R}\{tx-l_0(x)\}=tx_t-l_0(x_t)\]
 for some $x_t\in\R$.
 \end{lemma}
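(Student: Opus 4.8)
The plan is to prove both equalities for all $t$ in the interval $]\lambda,\lambda+\varepsilon[$ with $\varepsilon=t_0-\lambda$, where $t_0>\lambda$ is a point with $\overline{L}(t_0)<+\infty$ provided by the hypothesis. Since $\overline{L}_{\mid[0,+\infty[}$ is proper, $\overline{L}$ is $]-\infty,+\infty]$-valued, so $\overline{L}(t_0)\in\R$; together with $\overline{L}(0)=0$ and the convexity of $\overline{L}$ this forces $\overline{L}$ to be real-valued on $[0,t_0]$ and continuous on $]0,t_0[$. Hence every $t\in\ ]\lambda,\lambda+\varepsilon[$ lies in the interior of the effective domain of $\overline{L}$, $\overline{L}(t)\in\R$, one may choose $t'\in\ ]t,t_0]$ with $\overline{L}(t')\in\R$, and moreover $t>\lambda\ge 0$. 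The inequality $\overline{L}(t)\ge\sup_{x\in\R}\{tx-l_0(x)\}$ is then free: by Lemma~\ref{l0-greater-L*} we have $l_0\ge\overline{L}^*$, so $\sup_{x\in\R}\{tx-l_0(x)\}\le\sup_{x\in\R}\{tx-\overline{L}^*(x)\}=\overline{L}^{**}(t)\le\overline{L}(t)$. What remains, and is the substance of the lemma, is the Varadhan-type upper bound $\overline{L}(t)\le\sup_{x\in\R}\{tx-l_0(x)\}$ together with attainment of the supremum in $\R$.

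For the upper bound, split $\int_\R e^{c_\alpha^{-1}tx}\mu_\alpha(dx)$ into the contributions of $(-\infty,-M)$, $[-M,M]$ and $(M,+\infty)$, and recall that the exponential rate $\limsup_\alpha c_\alpha\log(\cdot)$ of a finite sum of nonnegative quantities is the maximum of the rates. As $t>0$ and $\mu_\alpha$ is a probability measure, $\int_{(-\infty,-M)}e^{c_\alpha^{-1}tx}\mu_\alpha(dx)\le e^{-c_\alpha^{-1}tM}$, whence the left-tail rate is $\le-tM$; as $t-t'<0$, $\int_{(M,+\infty)}e^{c_\alpha^{-1}tx}\mu_\alpha(dx)\le e^{c_\alpha^{-1}(t-t')M}\int_\R e^{c_\alpha^{-1}t'x}\mu_\alpha(dx)$, whence the right-tail rate is $\le-(t'-t)M+\overline{L}(t')$. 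Both tail rates tend to $-\infty$ as $M\to+\infty$, so for $M$ large enough (depending on $t$) the rate of $\int_{[-M,M]}e^{c_\alpha^{-1}tx}\mu_\alpha(dx)$ equals $\overline{L}(t)$. On the compact $[-M,M]$ run the usual covering argument: for each $x$ and each $\eta>0$ pick $\varepsilon_x\in\ ]0,\eta]$ with $\limsup_\alpha c_\alpha\log\mu_\alpha(]x-\varepsilon_x,x+\varepsilon_x[)\le-l_0(x)+\eta$ (possible by the definition of $l_0$, with the evident modification when $l_0(x)=+\infty$), extract a finite subcover centred at $x_1,\dots,x_N$, and bound each piece using $\int_{]x_i-\varepsilon_{x_i},x_i+\varepsilon_{x_i}[}e^{c_\alpha^{-1}tx}\mu_\alpha(dx)\le e^{c_\alpha^{-1}t(x_i+\varepsilon_{x_i})}\mu_\alpha(]x_i-\varepsilon_{x_i},x_i+\varepsilon_{x_i}[)$; this gives $\overline{L}(t)\le\max_{1\le i\le N}\{t(x_i+\varepsilon_{x_i})-l_0(x_i)+\eta\}\le\sup_{x\in\R}\{tx-l_0(x)\}+(t+1)\eta$, and letting $\eta\downarrow0$ yields the upper bound, hence the first equality.

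For the attainment, note that $g(x):=tx-l_0(x)$ is upper semi-continuous because $l_0$ is lower semi-continuous. Since $t>0$, $g(x)\le tx\to-\infty$ as $x\to-\infty$; and for $x>M$ the right-tail estimate, applied to small balls around $x$, gives $l_0(x)\ge t'x-\overline{L}(t')$, so $g(x)\le-(t'-t)x+\overline{L}(t')\to-\infty$ as $x\to+\infty$. Since the value of $\sup_{x\in\R}g(x)$ is the finite number $\overline{L}(t)$, it is unchanged upon restricting to a sufficiently large compact interval, on which the upper semi-continuous function $g$ attains its maximum; this produces the desired $x_t\in\R$ with $\overline{L}(t)=tx_t-l_0(x_t)$.

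The main obstacle is the Varadhan upper bound, and inside it the control of the two tails of the integral with only one-sided finiteness information on $\overline{L}$ (finiteness is known only for arguments $>\lambda\ge0$, and nothing is assumed about $\overline{L}$ at negative arguments). The key point is that $t>0$ makes the left tail exponentially negligible for free, while a single finiteness point $t'>t$ takes care of the right tail; this is exactly where the two hypotheses --- $\overline{L}_{\mid[0,+\infty[}$ proper and $\overline{L}(t)<+\infty$ for some $t>\lambda$ --- are used.
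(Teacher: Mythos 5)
Your proof is correct, and it reaches the two equalities by a more self-contained route than the paper. Where you prove the variational identity $\overline{L}(t)=\sup_{x\in\R}\{tx-l_0(x)\}$ from scratch --- lower bound from Lemma \ref{l0-greater-L*} (Fenchel's inequality), upper bound via the three-way split of the integral, the observation that $t>0$ kills the left tail and a single finiteness point $t'>t$ kills the right tail, and the standard covering argument on $[-M,M]$ --- the paper instead outsources exactly this step: it invokes Lemma 4.3.8 of Dembo--Zeitouni for the tail control and then part (b) of Theorem 1 of \cite{Comman(2007)STAPRO77}, which directly yields $\overline{L}(t)=\sup_{x\le M/t}\{tx-l_0(x)\}$ for $M$ large. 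Your approach buys independence from that external variational theorem at the cost of rewriting a Varadhan-type upper bound; the paper's buys brevity. For the attainment of the supremum the two arguments are essentially equivalent: the paper takes a maximizing sequence, traps it in $\left[\overline{L}(t)/t-1/t,\,M/t\right]$ (using $l_0\ge 0$ and $t>0$), extracts a convergent subsequence and uses lower semi-continuity of $l_0$; you phrase the same compactness as upper semi-continuity plus coercivity of $x\mapsto tx-l_0(x)$, with the coercivity at $+\infty$ coming from $l_0\ge\overline{L}^*\ge t'x-\overline{L}(t')$. Both are sound; no gap.
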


\begin{proof}
By Lemma \ref{existence L on t} there exists $\varepsilon>0$ such that 
 $\overline{L}_{\mid [\lambda,\lambda+\varepsilon]}$  is real-valued, hence  
\[\forall t\in\ ]\lambda,\lambda+\varepsilon[,\ \ \ \ \ \ \lim_{M\rightarrow+\infty}\limsup_\alpha c_\alpha\log\int_{[M/t,+\infty[} e^{c_\alpha^{-1}t z}\mu_\alpha(dz)=-\infty\]  (Lemma 4.3.8 of  \cite{Dembo-Zeitouni}). 
 Part (b) of  Theorem 1 of \cite{Comman(2007)STAPRO77} yields for each $t\in\ ]\lambda,\lambda+\varepsilon]$ and for each 
 $M$ large enough
   \begin{equation}\label{expression-L-when-locally-finite-eq-10}
\overline{L}(t)=\sup_{x\le M/t}\{tx-l_0(x)\}.
\end{equation}
  Let $t\in\ ]\lambda,\lambda+\varepsilon[$. For each integer $n\ge 1$ there exists $x_n\le M/t$  such that 
  \begin{equation}\label{expression-L-when-locally-finite-eq-20}
  \overline{L}(t)\ge tx_n-l_0(x_n)>\overline{L}(t)-1/n\ge\overline{L}(t)-1
  \end{equation}
  hence $x_n\in\left[\overline{L}(t)/t-1/t,M/t\right]$. Therefore, the sequence $(x_n)$ has a  subsequence $(x_{n_m})$  converging to some $x\in\left[\overline{L}(t)/t-1/t,M/t\right]$,  so that   letting $n\rightarrow+\infty$ in (\ref{expression-L-when-locally-finite-eq-20}) yields
     \[
  \overline{L}(t)=\lim_m(tx_{n_m}-l_0(x_{n_m}))=tx-\lim_m l_0(x_{n_m})\le tx-l_0(x),
  \]
   where the last inequality follows from the lower semi-continuity of $l_0$.
   From the above expression  and  (\ref{expression-L-when-locally-finite-eq-10}),  we get  $\overline{L}(t)=tx-l_0(x)$, which proves the lemma. 
\end{proof}

\begin{lemma}\label{l0=l1=L*-on-left-right derivative}
  Assume that  $\overline{L}_{\mid [0,+\infty[}$ is proper and  $\overline{L}(t)<+\infty$ for some $t>\lambda$. There exists $\varepsilon>0$ such that  
 \[\forall t\in\  ]\lambda,\lambda+\varepsilon[,\ \ \ \ \ \ \ \ l_0(\overline{L}_r'(t))=\overline{L}^*(\overline{L}_r'(t)).\]
 When $\overline{L}'_r(\lambda^+)\neq-\infty$, the above equality is true with $t=\lambda$,  and $\overline{L}_r'(\lambda^+)$ in place of  $\overline{L}_r'(t)$.
 \end{lemma}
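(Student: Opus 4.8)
The plan is to prove the two claims of Lemma \ref{l0=l1=L*-on-left-right derivative} separately, using Lemma \ref{expression-L-when-locally-finite} as the main engine and Lemma \ref{l0-greater-L*} for the reverse inequality. First I would fix $\varepsilon>0$ small enough so that both Lemma \ref{existence L on t} (giving $\overline{L}_{\mid[\lambda,\lambda+\varepsilon]}$ real-valued and $\overline{L}_{\mid]\lambda,\lambda+\varepsilon]}$ continuous) and Lemma \ref{expression-L-when-locally-finite} apply; shrinking $\varepsilon$ further if needed, I can assume $\overline{L}_r'$ is finite on $]\lambda,\lambda+\varepsilon[$ (e.g.\ via Lemma \ref{existence L on t}$b)$, since we are in case $(i)$, $(ii)$, or $(iii)$). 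Fix $t\in\ ]\lambda,\lambda+\varepsilon[$; Lemma \ref{expression-L-when-locally-finite} gives some $x_t\in\R$ with $\overline{L}(t)=tx_t-l_0(x_t)$ and $l_0$ supporting $\overline{L}$ at $t$ in the sense that $\overline{L}(s)\ge sx_t-l_0(x_t)$ for all $s\in\R$ (this follows from $\overline{L}(s)\ge\sup_y\{sy-l_0(y)\}$, which is the inequality established inside the proof of Lemma \ref{l0-greater-L*} for all $s\neq0$ and trivially for $s=0$). This says precisely that $x_t$ is a subgradient of the convex function $\overline{L}$ at $t$, i.e.\ $x_t\in[\overline{L}_l'(t),\overline{L}_r'(t)]$.

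The heart of the argument is then to upgrade "$x_t$ is \emph{some} subgradient at $t$" to "$\overline{L}_r'(t)$ itself is realized this way." The idea is that $l_0$ is lower semicontinuous and, by the support inequality above, $l_0(x)\ge tx-\overline{L}(t)$ for all $x$; combining with $l_0(x)\ge\overline{L}^*(x)$ from Lemma \ref{l0-greater-L*}, I want to pin down $l_0$ at the point $\overline{L}_r'(t)$. Concretely, I would apply the construction of Lemma \ref{expression-L-when-locally-finite} simultaneously at two nearby points $t$ and $t'$ with $\lambda<t<t'<\lambda+\varepsilon$: get $x_t\in[\overline{L}_l'(t),\overline{L}_r'(t)]$ and $x_{t'}\in[\overline{L}_l'(t'),\overline{L}_r'(t')]$ with $\overline{L}(s)\ge sx_t-l_0(x_t)$ and $\overline{L}(s)\ge sx_{t'}-l_0(x_{t'})$ for all $s$. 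Since $x_t\le\overline{L}_r'(t)\le\overline{L}_l'(t')\le x_{t'}$, letting $t'\downarrow t$ forces $x_{t'}\to\overline{L}_r'(t)$ through values $\ge\overline{L}_r'(t)$, and by right continuity of $\overline{L}_r'$ (Lemma \ref{improper-L}) we also have $\overline{L}(t') \to \overline{L}(t)$ type control; then the affine minorants $s\mapsto sx_{t'}-l_0(x_{t'})$ of $\overline{L}$ converge, and lower semicontinuity of $l_0$ gives $l_0(\overline{L}_r'(t))\le\liminf_{t'\downarrow t} l_0(x_{t'})$. Evaluating the support inequality $\overline{L}(s)\ge sx_{t'}-l_0(x_{t'})$ at the optimal $s$ for the slope $x_{t'}$ shows $l_0(x_{t'})=\overline{L}^*(x_{t'})$ (since $\overline{L}^*(x_{t'})=\sup_s\{sx_{t'}-\overline{L}(s)\}\ge x_{t'}t'-\overline{L}(t')=l_0(x_{t'})$ using that $x_{t'}$ is attained at $t'$, combined with $l_0\ge\overline{L}^*$), and then right-continuity/lower-semicontinuity of $\overline{L}^*$ on the relevant interval passes this to the limit: $l_0(\overline{L}_r'(t))\le\overline{L}^*(\overline{L}_r'(t))$, which with Lemma \ref{l0-greater-L*} gives equality.

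For the case $t=\lambda$ with $\overline{L}_r'(\lambda^+)\neq-\infty$: here $\overline{L}_r'(\lambda^+)=\lim_{t\downarrow\lambda}\overline{L}_r'(t)$ is finite by right continuity (Lemma \ref{improper-L}) when $\lambda>0$, and when $\lambda=0$ it is finite by hypothesis in this branch. I would apply the just-proved equality at points $t\downarrow\lambda$: $l_0(\overline{L}_r'(t))=\overline{L}^*(\overline{L}_r'(t))$, note $\overline{L}_r'(t)\downarrow\overline{L}_r'(\lambda^+)$, and pass to the limit exactly as above — lower semicontinuity of $l_0$ gives $l_0(\overline{L}_r'(\lambda^+))\le\liminf_t l_0(\overline{L}_r'(t))$, monotone convergence of $\overline{L}^*$ from the right (it is convex, hence continuous on the interior of its domain, and at worst upper-semicontinuous from the right at the left endpoint) gives $\liminf_t\overline{L}^*(\overline{L}_r'(t))\le\overline{L}^*(\overline{L}_r'(\lambda^+))$, so $l_0(\overline{L}_r'(\lambda^+))\le\overline{L}^*(\overline{L}_r'(\lambda^+))$, and Lemma \ref{l0-greater-L*} closes it.

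I expect the main obstacle to be the limiting step $t'\downarrow t$ (and $t\downarrow\lambda$): one must ensure that the approximate maximizers $x_{t'}$ really converge to $\overline{L}_r'(t)$ and not merely to something in $[\overline{L}_r'(t),\overline{L}_r'(t')]$ that could drift — this is where right continuity of $\overline{L}_r'$ (Lemma \ref{improper-L}) is essential — and that the value $l_0(x_{t'})=\overline{L}^*(x_{t'})$ does not jump up in the limit, i.e.\ controlling $\limsup_{t'\downarrow t}\overline{L}^*(x_{t'})\le\overline{L}^*(\overline{L}_r'(t))$. Since $\overline{L}^*$ is convex and nondecreasing to the right of any of its subgradient values, and $x_{t'}\downarrow\overline{L}_r'(t)$, this $\limsup$ bound is exactly right-continuity of the convex function $\overline{L}^*$, which holds on the interior of its effective domain and at the left endpoint needs only the trivial upper bound $\overline{L}^*(\overline{L}_r'(t))\le\overline{L}^*(x_{t'})$ reversed — so the delicate point is genuinely just endpoint behavior, handled by the same lower-semicontinuity argument applied to $l_0$. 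Everything else is bookkeeping with convex conjugates.
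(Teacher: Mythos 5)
Your proposal is correct and follows essentially the same route as the paper: both produce the approximate maximizers $x_{t'}$ of Lemma \ref{expression-L-when-locally-finite} at points $t'$ just to the right of $t$, identify them as subgradients satisfying $l_0(x_{t'})=\overline{L}^*(x_{t'})$ via Lemma \ref{l0-greater-L*}, squeeze $x_{t'}\to\overline{L}_r'(t)$ using the right continuity of $\overline{L}_r'$ from Lemma \ref{improper-L}, and close with the lower semicontinuity of $l_0$. The only (harmless) differences are that you finish by invoking continuity of $\overline{L}^*$ on its effective domain where the paper runs a difference-quotient computation, and you treat $t=\lambda$ by a second limiting pass where the paper reruns the argument verbatim at $\lambda$.
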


\begin{proof}
By Lemma \ref{existence L on t}
 there exists $\varepsilon_0>0$ such that 
 $\overline{L}_{\mid [\lambda,\lambda+\varepsilon_0]}$  is real-valued bounded and $\overline{L}_{\mid ]\lambda,\lambda+\varepsilon_0]}$  continuous. Let $t\in\ ]\lambda,\lambda+\varepsilon_0[$. 
For each $\varepsilon\in[0,\lambda+\varepsilon_0-t[$ let $\partial\overline{L}(t+\varepsilon)$ denote the set of subgradients of $\overline{L}$ at $t+\varepsilon$. Note that 
\[\sup\partial\overline{L}(t)=\overline{L}'_r(t)\in\R\]  and \[\forall \varepsilon\in\ ]0,\lambda+\varepsilon_0-t[,\ \ \ \ \ \ \partial \overline{L}(t+\varepsilon)=\left[\overline{L}'_l(t+\varepsilon),\overline{L}'_r(t+\varepsilon)\right].\] 
 For each $\varepsilon\in\ ]0,\lambda+\varepsilon_0-t[$,  Lemma \ref{expression-L-when-locally-finite} yields some  $x_{t+\varepsilon}\in\R$ such that
  \[\overline{L}(t+\varepsilon)=(t+\varepsilon) x_{t+\varepsilon}-l_0(x_{t+\varepsilon})\]
  hence by Lemma \ref{l0-greater-L*},
  \[\overline{L}(t+\varepsilon)\le (t+\varepsilon) x_{t+\varepsilon}-\overline{L}^*(x_{t+\varepsilon})\le\overline{L}^{**}(t+\varepsilon);\]
  since $\overline{L}\ge\overline{L}^{**}$  we obtain 
 $l_0(x_{t+\varepsilon})=\overline{L}^*(x_{t+\varepsilon})$; 
in particular, $x_{t+\varepsilon}\in\partial\overline{L}(t+\varepsilon)$. 
  Putting $x=\overline{L}_r'(t)$ we have 
   \[
  \forall\varepsilon\in\  ]0,\lambda+\varepsilon_0-t[,\ \ \ \ \ x=\sup\partial\overline{L}(t)\le x_{t+\varepsilon}\le\overline{L}'_r(t+\varepsilon)
  \]
    hence $\lim_{\varepsilon\rightarrow 0} x_{t+\varepsilon}=x$ 
   (because $\lim_{\varepsilon\rightarrow 0}\overline{L}'_r(t+\varepsilon)=x$ by Lemma \ref{improper-L}) and 
 \begin{multline*}\label{l0=l1=L*-on-left-right derivative-eq100}
 x=\lim_{\varepsilon\rightarrow
0^+}\frac{\overline{L}(t+\varepsilon)-\overline{L}(t)}{\varepsilon}=
\lim_{\varepsilon\rightarrow 0^+}\frac{(t+\varepsilon)x_{t+\varepsilon}-l_0(x_{t+\varepsilon})-
\overline{L}(t)}{\varepsilon}
\\
= x+\lim_{\varepsilon\rightarrow 0^+}\frac{t x_{t+\varepsilon}-l_0(x_{t+\varepsilon})-\overline{L}(t)}{\varepsilon},
 \end{multline*}
 which implies
 \[
 0=\lim_{\varepsilon\rightarrow 0^+}(t x_{t+\varepsilon}-l_0(x_{t+\varepsilon})-\overline{L}(t))
\le tx-l_0(x)-\overline{L}(t)\le 0,
 \]
 where the first inequality follows from the lower semi-continuity of $l_0$, and 
 the second inequality follows from Lemma \ref{expression-L-when-locally-finite}; 
therefore, $\overline{L}(t)=tx-l_0(x)$ hence $l_0(x)=\overline{L}^*(x)$ 
 since  $\overline{L}(t)=tx-\overline{L}^*(x)$ (because $x\in\partial\overline{L}(t)$); this proves  the first assertion. 
 
 When $\overline{L}'_r(\lambda^+)\neq-\infty$, the hypotheses imply $\overline{L}'_r(\lambda^+)\in\R$ and the last assertion follows noting that the above proof works verbatim with $t=\lambda$ and $\overline{L}_r'(\lambda^+)$ (resp. $\overline{L}(\lambda^+)$) in place of $\overline{L}_r'(t)$ (resp. $\overline{L}(t)$).
 \end{proof}

\begin{lemma}\label{inf L*}
Assume that  $\overline{L}_{\mid [0,+\infty[}$ is proper and  $\overline{L}(t)<+\infty$ for some $t>0$. Let 
   $(t_i)$ be a sequence   of positive numbers converging to $0$.
  We have   
   \[\lim_i\overline{L}^*(\overline{L}'_r(t_i))=-\overline{L}(0^+)\in [0,+\infty[.\]
   If furthermore $\overline{L}'_r(0^+)>-\infty$, then 
   \[\lim_i\overline{L}^*(\overline{L}'_r(t_i))=\overline{L}^*(\overline{L}'_r(0^+))=\sup_{t>0}\{t \overline{L}'_r(0^+)-\overline{L}(t)\}.\]
    \end{lemma}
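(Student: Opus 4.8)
The plan is to reduce the statement to the one-sided behaviour of $\overline{L}$ and of $\overline{L}'_r$ at $0$, the crucial tool being that $\overline{L}'_r(t)$ is a subgradient of $\overline{L}$ at each interior point $t$ of the domain. First I would invoke Lemma~\ref{existence L on t} to obtain $\varepsilon_0>0$ with $\overline{L}_{\mid[0,\varepsilon_0]}$ real-valued and bounded and $\overline{L}_{\mid]0,\varepsilon_0]}$ continuous, and Lemma~\ref{improper-L} to get that ${\overline{L}'_r}_{\mid]0,\varepsilon_0[}$ is real-valued, non-decreasing and right continuous; hence $\overline{L}(0^+)=\lim_{t\to0^+}\overline{L}(t)$ and $\overline{L}'_r(0^+)=\lim_{t\to0^+}\overline{L}'_r(t)$ both exist, the former in $\R$ (indeed $\overline{L}(0^+)\le\overline{L}(0)=0$ by convexity, and $\overline{L}(0^+)>-\infty$ since $\overline{L}$ admits an affine minorant through any fixed point of $]0,\varepsilon_0[$), so $-\overline{L}(0^+)\in[0,+\infty[$, and the latter in $[-\infty,+\infty[$. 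Next, for $s\in\ ]0,\varepsilon_0[$ the Fenchel--Young equality applied to $\overline{L}'_r(s)\in\partial\overline{L}(s)$ gives $\overline{L}^*(\overline{L}'_r(s))=s\,\overline{L}'_r(s)-\overline{L}(s)$ (the same identity used in the proof of Lemma~\ref{l0=l1=L*-on-left-right derivative}).

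It therefore suffices, for the first assertion, to show $s\,\overline{L}'_r(s)\to0$ as $s\downarrow0$. Fixing $t\in\ ]0,\varepsilon_0[$, convexity yields for $0<s<t$
\[2\bigl(\overline{L}(s)-\overline{L}(s/2)\bigr)\ \le\ s\,\overline{L}'_r(s)\ \le\ \frac{s\bigl(\overline{L}(t)-\overline{L}(s)\bigr)}{t-s},\]
the left bound coming from $\overline{L}'_r(s)\ge\overline{L}'_l(s)\ge\bigl(\overline{L}(s)-\overline{L}(s/2)\bigr)/(s/2)$ and the right bound from $\overline{L}'_r(s)\le\bigl(\overline{L}(t)-\overline{L}(s)\bigr)/(t-s)$; letting $s\downarrow0$ with $t$ fixed, both outer terms tend to $0$ because $\overline{L}(s),\overline{L}(s/2)\to\overline{L}(0^+)\in\R$. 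Hence $\overline{L}^*(\overline{L}'_r(s))\to-\overline{L}(0^+)$, and since $(t_i)$ is eventually in $]0,\varepsilon_0[$ we conclude $\lim_i\overline{L}^*(\overline{L}'_r(t_i))=-\overline{L}(0^+)\in[0,+\infty[$.

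For the second assertion assume $\overline{L}'_r(0^+)>-\infty$, so $\overline{L}'_r(0^+)\in\R$. Letting $s\downarrow0$ in the subgradient inequality $\overline{L}(u)\ge\overline{L}(s)+\overline{L}'_r(s)(u-s)$ (legitimate since $\overline{L}'_r(s)\to\overline{L}'_r(0^+)$ is finite and $\overline{L}(s)\to\overline{L}(0^+)$) gives $\overline{L}(u)\ge\overline{L}(0^+)+\overline{L}'_r(0^+)\,u$ for all $u\in\R$, whence $\overline{L}^*(\overline{L}'_r(0^+))=\sup_{u\in\R}\{u\,\overline{L}'_r(0^+)-\overline{L}(u)\}\le-\overline{L}(0^+)$. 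The reverse inequality, and likewise $\sup_{t>0}\{t\,\overline{L}'_r(0^+)-\overline{L}(t)\}\ge-\overline{L}(0^+)$, follow by retaining only the term $u=t_i$ (resp.\ $t=t_i$) and letting $i\to\infty$, since $t_i\,\overline{L}'_r(0^+)-\overline{L}(t_i)\to-\overline{L}(0^+)$. Together with the first assertion this closes the chain $\lim_i\overline{L}^*(\overline{L}'_r(t_i))=\overline{L}^*(\overline{L}'_r(0^+))=\sup_{t>0}\{t\,\overline{L}'_r(0^+)-\overline{L}(t)\}=-\overline{L}(0^+)$.

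The step I expect to be the main obstacle is the lower bound $s\,\overline{L}'_r(s)\to0$ in the regime $\overline{L}'_r(0^+)=-\infty$: there $\overline{L}'_r(s)$ diverges to $-\infty$ and one must check it does so more slowly than $1/s$, which is exactly what the secant-to-midpoint estimate $s\,\overline{L}'_r(s)\ge2(\overline{L}(s)-\overline{L}(s/2))$ provides once one knows $\overline{L}$ is finite (hence continuous) on all of $]0,\varepsilon_0]$; the rest is routine convexity bookkeeping.
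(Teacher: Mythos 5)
Your proof is correct, and it shares the paper's starting point: both arguments rest on the Fenchel--Young identity $\overline{L}^*(\overline{L}'_r(t_i))=t_i\overline{L}'_r(t_i)-\overline{L}(t_i)$ for $t_i$ in the interior of the effective domain, obtained after invoking Lemmas \ref{existence L on t} and \ref{improper-L}. Where you diverge is in how the delicate regime $\overline{L}'_r(0^+)=-\infty$ is handled. The paper splits into two cases: when $\overline{L}'_r(0^+)>-\infty$ it passes to the limit in the identity exactly as you do, but when $\overline{L}'_r(0^+)=-\infty$ it works with the second form $\overline{L}^*(\overline{L}'_r(t_i))=\sup_{t>0}\{t\overline{L}'_r(t_i)-\overline{L}(t)\}$, introduces auxiliary points $\lambda_i>0$ with $\lambda_i\overline{L}'_r(t_i)=\overline{L}(\lambda_i)$, and decomposes the supremum over $]0,\lambda_i]$ and $]\lambda_i,+\infty[$ to extract the limit $-\overline{L}(0^+)$. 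You instead prove the single statement $s\,\overline{L}'_r(s)\to 0$ as $s\downarrow 0$ via the two-sided secant estimate $2(\overline{L}(s)-\overline{L}(s/2))\le s\,\overline{L}'_r(s)\le s(\overline{L}(t)-\overline{L}(s))/(t-s)$, which treats the finite and infinite cases of $\overline{L}'_r(0^+)$ uniformly and reads off the first assertion directly from the Fenchel--Young identity. This is a genuine simplification: it eliminates the case analysis and the auxiliary $\lambda_i$, at the cost of nothing beyond the finiteness of $\overline{L}(0^+)$, which you justify correctly (affine minorant from an interior subgradient for the lower bound, convexity with $\overline{L}(0)=0$ for the upper bound). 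Your treatment of the second assertion, passing to the limit in the subgradient inequality to get $\overline{L}^*(\overline{L}'_r(0^+))\le-\overline{L}(0^+)$ and closing the chain with the terms $t=t_i$, matches the paper's in substance.
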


\begin{proof}
By Lemma \ref{existence L on t} (applied with $\lambda=0$),  
  there exists $\varepsilon>0$ such that $\overline{L}_{\mid ]0,\varepsilon]}$  is real-valued, bounded  and continuous, hence  eventually 
  $\overline{L}'_r(t_i)\in\R$, 
  \begin{equation}\label{inf L*-eq20}
\overline{L}^*(\overline{L}'_r(t_i))=t_i \overline{L}'_r(t_i)-\overline{L}(t_i)=\sup_{t> 0}\{t \overline{L}'_r(t_i)-\overline{L}(t)\}, 
\end{equation}
  and $\overline{L}(0^+)$ exists in $]-\infty,0]$; furthermore, $\overline{L}'_r(0^+)$ exists in $[-\infty,+\infty[$ by 
   Lemma \ref{improper-L}. Since   $\overline{L}^*$ is continuous  on its effective domain,    and 
   $\overline{L}(0^+)$ is a non-positive real number,  (\ref{inf L*-eq20})  ensures the existence of 
     $\lim_i \overline{L}^*(\overline{L}'_r(t_i))$   in   $[0,+\infty[$. 
     
     First assume that   $\overline{L}'_r(0^+)>-\infty$. Then  
   $\overline{L}'_r(0^+)\in\R$  and   (\ref{inf L*-eq20}) yields
\[\overline{L}(0^+)=\lim_i \overline{L}(t_i)=\lim_i (t_i \overline{L}'_r(t_i)-\overline{L}^*(\overline{L}'_r(t_i)))=-\lim_i \overline{L}^*(\overline{L}'_r(t_i))=-\overline{L}^*(\overline{L}'_r(0^+)),\]
which proves the first assertion and the first equality of the second assertion;  we have 
\[\overline{L}^*(\overline{L}'_r(0^+))\ge \sup_{t> 0}\{t \overline{L}'_r(0^+)-\overline{L}(t)\}\ge \lim_i\{t_i \overline{L}'_r(0^+)-\overline{L}(t_i)\}=-\overline{L}(0^+),\]
and the second equality of the second   assertion follows.

     Assume   that  $\overline{L}'_r(0^+)=-\infty$. 
    For each $i$ large enough   there exists     $\lambda_i>0$ such that  
     $\lambda_i \overline{L}'_r(t_i)=\overline{L}(\lambda_i)$, hence 
      \[
      \sup_{t>\lambda_i}\{t\overline{L}'_r(t_i)-\overline{L}(t)\}= 0
      \]
      and 
   \[
\lim_i\sup_{t\in\  ]0,\lambda_i]}\{t \overline{L}'_r(t_i) -\overline{L}(t)\}=-\overline{L}(0^+), 
\]
           so that the first assertion  follows from (\ref{inf L*-eq20}) since 
                                         $-\overline{L}(0^+)\ge 0$.
                   \end{proof}

Lemma \ref{inf L*} shows that when $\overline{L}'_r(0^+)=-\infty$,    the map $\overline{L}^*$  extends by continuity to a 
$[-\overline{L}(0^+),+\infty]$-valued map on $[-\infty,+\infty[$ 
 by putting  $\overline{L}^*(-\infty)=-\overline{L}(0^+)$; in what follows,  we  implicitely use this extension.

\begin{lemma}\label{lemma-case-differentiable}
We assume that   $\overline{L}_{\mid ]\lambda,\lambda+\varepsilon[}$ is    differentiable for some $\varepsilon>0$. 
\begin{nitemize}
\item[$a)$]  ${\overline{L}'}_{\mid  ]\lambda,\lambda+\varepsilon[}$  extends   to a non-decreasing 
  continuous   surjection between   $[\lambda,\lambda+\varepsilon]$ and  $\left[\overline{L}_r'(\lambda^+),\overline{L}_l'(\lambda+\varepsilon)\right]$.
\item[$b)$] ${\overline{L}^*}_{\mid [\overline{L}_r'(\lambda^+), \overline{L}_l'(\lambda+\varepsilon)[}$ is  $[0,+\infty[$-valued and 
extends to a   $[0,+\infty]$-valued,  strictly increasing and  continuous map on 
 $\left[\overline{L}_r'(\lambda^+), \overline{L}_l'(\lambda+\varepsilon)\right]$, which takes the value $+\infty$ at $\overline{L}_l'(\lambda+\varepsilon)$ if and only if $\overline{L}_l'(\lambda+\varepsilon)=+\infty$; 
  it  vanishes at $\overline{L}_r'(\lambda^+)$ if and only if either $\lambda>0$, $\overline{L}$ is differentiable at $\lambda$  and  linear on $[0,\lambda]$, or 
$\lambda=0$ and $\overline{L}$  is right continuous at zero. The map  ${\overline{L}^*}_{\mid ]\overline{L}_r'(\lambda^+), \overline{L}_l'(\lambda+\varepsilon)[}$ is positive and 
  strictly convex; if furthermore,  ${\tilde{\lambda}}<\lambda+\varepsilon$ and $\overline{L}_{\mid ]{\tilde{\lambda}},\lambda+\varepsilon[}$ is strictly convex, then  ${\overline{L}^*}_{\mid ]\overline{L}_r'(\lambda^+), \overline{L}_l'(\lambda+\varepsilon)[}$  is differentiable (with  ${\tilde{\lambda}}=\sup\{t>\lambda: \overline{L}_{\mid ]\lambda,t]}\ \textit{is affine}\}$). 
\end{nitemize}
\end{lemma}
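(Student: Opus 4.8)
The plan is to reduce the whole statement to one-dimensional convex analysis of $\overline{L}$ and its one-sided derivatives on $[\lambda,\lambda+\varepsilon]$, combined with the Fenchel equality and Lemmas~\ref{improper-L} and~\ref{inf L*}. First one records that $\overline{L}_{\mid[0,+\infty[}$ is proper: since $\overline{L}$ is finite (being differentiable) on $]\lambda,\lambda+\varepsilon[$, it is not identically $\pm\infty$ on any subinterval, so the dichotomy in Lemma~\ref{existence L on t}$a)$ rules out impropriety; in particular $\overline{L}(t)<+\infty$ for some $t>\lambda$, so Lemmas~\ref{improper-L} and~\ref{inf L*} are available. For $a)$, on the open interval $]\lambda,\lambda+\varepsilon[$ the map $\overline{L}'=\overline{L}'_r=\overline{L}'_l$ is non-decreasing and continuous (the right derivative of a convex function being right continuous, the left derivative left continuous); its limit at $\lambda^+$ is by definition $\overline{L}'_r(\lambda^+)$ and its limit at $(\lambda+\varepsilon)^-$ is by definition $\overline{L}'_l(\lambda+\varepsilon)$. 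Extending $\overline{L}'$ by these two (possibly infinite) limits gives a non-decreasing continuous map on the compact connected set $[\lambda,\lambda+\varepsilon]$, whose image is therefore exactly $\left[\overline{L}'_r(\lambda^+),\overline{L}'_l(\lambda+\varepsilon)\right]$; this is $a)$.

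The core of $b)$ is the Fenchel equality: for $t\in\ ]\lambda,\lambda+\varepsilon[$, putting $x=\overline{L}'(t)\in\partial\overline{L}(t)$ one gets $\overline{L}^*(x)=tx-\overline{L}(t)\in\R$, and $\overline{L}^*(x)\ge 0$ since $\overline{L}(0)=0$; by $a)$ every $x\in\ \left]\overline{L}'_r(\lambda^+),\overline{L}'_l(\lambda+\varepsilon)\right[$ is of this form, so ${\overline{L}^*}_{\mid ]\overline{L}'_r(\lambda^+),\overline{L}'_l(\lambda+\varepsilon)[}$ is $[0,+\infty[$-valued. Its value at the left endpoint is a finite nonnegative number: it equals $\lambda\overline{L}'_r(\lambda)-\overline{L}(\lambda)$ when $\lambda>0$ (since $\overline{L}'_r(\lambda)\in\R$ by Lemma~\ref{improper-L} and $\overline{L}'_r(\lambda)\in\partial\overline{L}(\lambda)$), and $-\overline{L}(0^+)$ when $\lambda=0$ by Lemma~\ref{inf L*}, with the convention $\overline{L}^*(-\infty)=-\overline{L}(0^+)$ in the case $\overline{L}'_r(0^+)=-\infty$. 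Inverting the subgradient relation in the Fenchel equality gives $t\in\partial\overline{L}^*(x)$ for the above $x$; as every such $t$ lies in $]\lambda,\lambda+\varepsilon[\subset\ ]0,+\infty[$, each subgradient of $\overline{L}^*$ at an interior point of $\left[\overline{L}'_r(\lambda^+),\overline{L}'_l(\lambda+\varepsilon)\right]$ is strictly positive, so $\overline{L}^*$ is strictly increasing on this interval. It is continuous (hence real-valued) on $[\overline{L}'_r(\lambda^+),\overline{L}'_l(\lambda+\varepsilon)[$, so it extends continuously to the closed interval by its increasing limit $\ell\in\ ]0,+\infty]$ at the right endpoint. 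Finally, $\ell=+\infty$ iff $\overline{L}'_l(\lambda+\varepsilon)=+\infty$: if $\overline{L}'_l(\lambda+\varepsilon)=+\infty$ then for any fixed $s\in\ ]\lambda,\lambda+\varepsilon[$ one has $\overline{L}^*(x)\ge sx-\overline{L}(s)\to+\infty$ as $x\to+\infty$; if $\overline{L}'_l(\lambda+\varepsilon)<+\infty$, the subgradient inequality at a fixed $t_0\in\ ]\lambda,\lambda+\varepsilon[$ gives $\overline{L}^*(x)=tx-\overline{L}(t)\le t\bigl(x-\overline{L}'(t_0)\bigr)+\overline{L}'(t_0)t_0-\overline{L}(t_0)$, which stays bounded as $x\uparrow\overline{L}'_l(\lambda+\varepsilon)$ because $0<t<\lambda+\varepsilon$, so $\ell<+\infty$.

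It remains to treat the three refinements in $b)$. For the vanishing at $\overline{L}'_r(\lambda^+)$: when $\lambda>0$, $\overline{L}^*(\overline{L}'_r(\lambda))=\lambda\overline{L}'_r(\lambda)-\overline{L}(\lambda)$ is $0$ iff the line of slope $\overline{L}'_r(\lambda)$ through the origin passes through $(\lambda,\overline{L}(\lambda))$; since convexity and $\overline{L}(0)=0$ force $\overline{L}(\lambda)/\lambda\le\overline{L}'_l(\lambda)\le\overline{L}'_r(\lambda)$, this forces both $\overline{L}'_l(\lambda)=\overline{L}'_r(\lambda)$ (differentiability at $\lambda$) and the chord slope to equal $\overline{L}'_l(\lambda)$, hence $\overline{L}$ affine, and so linear, on $[0,\lambda]$; the converse is immediate. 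When $\lambda=0$, $\overline{L}^*(\overline{L}'_r(0^+))=-\overline{L}(0^+)$ by Lemma~\ref{inf L*}, which vanishes iff $\overline{L}(0^+)=\overline{L}(0)=0$, i.e.\ iff $\overline{L}$ is right continuous at zero. Positivity on the open interval then follows from strict monotonicity and nonnegativity at the left endpoint. For strict convexity of ${\overline{L}^*}_{\mid ]\overline{L}'_r(\lambda^+),\overline{L}'_l(\lambda+\varepsilon)[}$: were $\overline{L}^*$ affine on some subinterval, $\partial\overline{L}^*$ would reduce there to a single constant $t_0$, but for each $x$ in that subinterval the particular $t(x)$ with $\overline{L}'(t(x))=x$ lies in $\partial\overline{L}^*(x)$, forcing $t(x)=t_0$ and hence $x=\overline{L}'(t_0)$ for a whole interval of values $x$, which is absurd. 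Finally, when ${\tilde{\lambda}}<\lambda+\varepsilon$ and $\overline{L}_{\mid ]{\tilde{\lambda}},\lambda+\varepsilon[}$ is strictly convex: by Lemma~\ref{existence L on t} $\overline{L}$ is affine with slope $\overline{L}'_r(\lambda^+)$ on $]\lambda,{\tilde{\lambda}}[$, so for $x\in\ \left]\overline{L}'_r(\lambda^+),\overline{L}'_l(\lambda+\varepsilon)\right[$ the corresponding $t(x)$ lies in $]{\tilde{\lambda}},\lambda+\varepsilon[$, where $\overline{L}'$ is a strictly increasing continuous bijection onto $\left]\overline{L}'_r(\lambda^+),\overline{L}'_l(\lambda+\varepsilon)\right[$; hence on that open interval $\overline{L}^*$ agrees with the classical Legendre transform of $\overline{L}_{\mid ]{\tilde{\lambda}},\lambda+\varepsilon[}$, the supremum in $\overline{L}^*(x)=\sup_t\{tx-\overline{L}(t)\}$ being attained at the unique point $t(x)$, and the duality between strict convexity and differentiability of conjugate convex functions (\cf\cite{Rockafellar-70}) shows that $\overline{L}^*$ is differentiable there, with $({\overline{L}^*})'=(\overline{L}')^{-1}$.

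The main obstacle is the endpoint bookkeeping: determining $\overline{L}^*$ precisely at $\overline{L}'_r(\lambda^+)$ and controlling it as $x\to\overline{L}'_l(\lambda+\varepsilon)$ cannot be done by naive calculus when $\overline{L}$ itself may be $+\infty$ or discontinuous at $\lambda$ (the case $\lambda=0$, where one may even have $\overline{L}'_r(0^+)=-\infty$) or at $\lambda+\varepsilon$; it is precisely here that Lemma~\ref{inf L*}, the closedness of the conjugate $\overline{L}^*$, and the subgradient inequalities above become indispensable. The differentiability statement relies, in addition, on the observation that the defining supremum is attained at an interior point of $]{\tilde{\lambda}},\lambda+\varepsilon[$, so that $\overline{L}^*$ genuinely coincides, locally, with the well-behaved Legendre transform of the strictly convex part of $\overline{L}$.
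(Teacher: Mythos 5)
Your proof is correct, and for part $a)$ and the monotonicity, continuity, endpoint-value and vanishing statements of part $b)$ it follows essentially the same route as the paper (Fenchel equality $\overline{L}^*(\overline{L}'(t))=t\overline{L}'(t)-\overline{L}(t)$, the inverted subgradient relation $t\in\partial\overline{L}^*(\overline{L}'(t))$ with $t>0$ to get strict increase, lower semicontinuity for continuity, and Lemma \ref{inf L*} for the case $\lambda=0$); you are in fact more explicit than the paper on the characterization of the vanishing at $\overline{L}'_r(\lambda^+)$, which the paper leaves to the reader. Where you genuinely diverge is in the last two claims of $b)$. For strict convexity the paper builds an explicit lower semicontinuous convex extension $g$ of $\overline{L}_{\mid ]\lambda,\lambda+\varepsilon[}$ (affine or $+\infty$ outside the interval, chosen so that $g$ is differentiable on the interior of its effective domain and not subdifferentiable outside it) and invokes Theorem 11.13 of \cite{Rockafellar_Wets}; your argument is instead a direct two-line contradiction: affinity of $\overline{L}^*$ on a nondegenerate subinterval would force the subgradient $t(x)$ to be constant there, hence $x=\overline{L}'(t_0)$ for an interval of $x$'s. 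This is more elementary and avoids the auxiliary construction entirely. For differentiability the paper again goes through the extension $g$ and Theorem 11.13, whereas you localize: since $\overline{L}$ has constant derivative $\overline{L}'_r(\lambda^+)$ on $]\lambda,{\tilde{\lambda}}]$, the maximizer $t(x)$ lies in $]{\tilde{\lambda}},\lambda+\varepsilon[$ for every $x$ in the open interval, so $\overline{L}^*$ coincides there with the conjugate of the strictly convex piece $\overline{L}_{\mid ]{\tilde{\lambda}},\lambda+\varepsilon[}$, and classical conjugate duality (strict convexity versus smoothness, Theorem 26.3 of \cite{Rockafellar-70}) applies. Both approaches are sound; the paper's buys a single uniform mechanism (one auxiliary function handling both strict convexity and differentiability, including the degenerate endpoint behaviours), while yours buys shorter, self-contained arguments that make visible exactly which feature of $\overline{L}$ (injectivity versus strict monotonicity of $\overline{L}'$ on the relevant piece) is responsible for each property of $\overline{L}^*$.
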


\begin{proof}
$a)$ Since $\overline{L}_{\mid ]\lambda,\lambda+\varepsilon[}$ is     differentiable,
 the map ${\overline{L}'}_{\mid  ]\lambda,\lambda+\varepsilon[}$ is continuous  by  Corollary  25.5.1 of \cite{Rockafellar-70}. Since $\overline{L}'_l(\lambda+\varepsilon)=\lim_{t\rightarrow\varepsilon,t<\varepsilon}\overline{L}'(\lambda+t)$,  the map  ${\overline{L}'}_{\mid  ]\lambda,\lambda+\varepsilon[}$ is a non-decreasing 
  continuous   surjection  onto  $\left]\overline{L}_r'(\lambda^+),\overline{L}_l'(\lambda+\varepsilon)\right[$, which  can be extended by continuity  to a non-decreasing 
  continuous   surjection between   $[\lambda,\lambda+\varepsilon]$ and  $\left[\overline{L}_r'(\lambda^+),\overline{L}_l'(\lambda+\varepsilon)\right]$.

 $b)$   Each $t\in\ ]\lambda,\lambda+\varepsilon[$ is a subgradient of  $\overline{L}^*$ at $\overline{L}'(t)$ so that ${\overline{L}^*}_{\mid ]\overline{L}_r'(\lambda^+), \overline{L}_l'(\lambda+\varepsilon)[}$ is  $[0,+\infty[$-valued and strictly increasing, hence 
 ${\overline{L}^*}_{\mid [\overline{L}_r'(\lambda^+), \overline{L}_l'(\lambda+\varepsilon)[}$  is  $[0,+\infty[$-valued and strictly increasing; in particular,   ${\overline{L}^*}_{\mid ]\overline{L}_r'(\lambda^+), \overline{L}_l'(\lambda+\varepsilon)[}$ is positive. 
 Since $\overline{L}^*$ is lower semi-continuous, it is continuous on its effective domain so that ${\overline{L}^*}_{\mid ]\overline{L}_r'(\lambda^+), \overline{L}_l'(\lambda+\varepsilon)[}$ is     continuous,  and thus  extends  to a  $[0,+\infty]$-valued,   strictly increasing and continuous   map on 
 $\left[\overline{L}_r'(\lambda^+), \overline{L}_l'(\lambda+\varepsilon)\right]$; this extended map can takes the value $+\infty$ only at $\overline{L}_l'(\lambda+\varepsilon)$; since $\overline{L}^*$ is lower semi-continuous, 
 $\overline{L}^*(\overline{L}_l'(\lambda+\varepsilon))$ is finite when $\overline{L}_l'(\lambda+\varepsilon)$ is finite; conversely, if 
 $\overline{L}_l'(\lambda+\varepsilon)=+\infty$, then $\left]\overline{L}_r'(\lambda^+),+\infty\right[$ is included in the effective domain of $\overline{L}^*$,  hence $\lim_{t\rightarrow\lambda+\varepsilon,t<\lambda+\varepsilon}\overline{L}^*(\overline{L}_r'(t))=+\infty$.  
 Let $g$ be the extension of   $\overline{L}_{\mid ]\lambda,\lambda+\varepsilon[}$ to $\R$ defined by putting
    \[
g(t) = \left\{\begin{array}{ll}
 +\infty & \  \textnormal{if $t\in\ ]-\infty,\lambda[$\ and $\overline{L}'_r(\lambda^+)=-\infty$}
\\
 (t-\lambda)\overline{L}'_r(\lambda) +  g(\lambda) &\ \textnormal{if  $t\in\ ]-\infty,\lambda[$ and  $\overline{L}'_r(\lambda^+)>-\infty$}
  \\
\overline{L}(\lambda^+) &\  \textnormal{if $t=\lambda$}
\\
 \overline{L}(t) &\  \textnormal{if $t\in\ ]\lambda,\lambda+\varepsilon[$}
 \\
  \overline{L}((\lambda+\varepsilon)^-) &\  \textnormal{if $t=\lambda+\varepsilon$}
    \\
 (t-(\lambda+\varepsilon))\overline{L}'_l(\lambda) + g(\lambda+\varepsilon) &\  \textnormal{if $t\in\ ]\lambda+\varepsilon,+\infty[$ and  $\overline{L}'_l(\lambda+\varepsilon)<+\infty$}
 \\
 +\infty &\ \textnormal{if  $t\in\ ]\lambda+\varepsilon,+\infty[$ and  $\overline{L}'_l(\lambda+\varepsilon)=+\infty$},
      \end{array}
    \right.
\]
so  that  $g$ is   convex,  lower semi-continuous,   differentiable on the interior of its effective domain, but not sub-differentiable at each point in  the complement of the interior of its effective domain; therefore, the  Legendre-Fenchel transform $g^*$ of $g$ 
  is strictly convex   on  the interior of its effective domain (\cite{Rockafellar_Wets}, Theorem 11.13), hence on 
 $]g_r'(\lambda^+),g_l'(\lambda+\varepsilon)[$. 
 Since $g'(t)=\overline{L}'(t)$ for all $t\in\ ]\lambda,\lambda+\varepsilon[$, we obtain 
   ${g^*}_{\mid ]g_r'(\lambda^+),g_l'(\lambda+\varepsilon)[}=\overline{L}^*_{\mid ]\overline{L}_r'(\lambda^+), \overline{L}_l'(\lambda+\varepsilon)[}$, which proves the strict convexity property. 
  The  first part of the first assertion, and  the first part of the second assertion are proved.   
 The continuity and strict increasingness  imply  that 
 $\inf_{ [\overline{L}'_r(\lambda^+),\overline{L}_r'(\lambda+\varepsilon)[} \overline{L}^*$ is a minimum, which  is attained at the unique point $\overline{L}'_r(\lambda^+)$; this proves the second part of the first  assertion  since $ \overline{L}^*(\overline{L}'_r(\lambda^+))=\lambda\overline{L}_r'(\lambda^+)-\overline{L}(\lambda^+)$ (using Lemma \ref{inf L*} when $\lambda=0$).
 Assume furthermore that   $\overline{L}_{\mid ]\lambda,\lambda+\varepsilon[}$ is strictly convex. The map 
 $g$ is not sub-differentiable at some $t\in[\lambda,\lambda+\varepsilon]$ if and only if either $t=\lambda=0$ and  $g'_r(0^+)=-\infty$, or   $t=\lambda+\varepsilon$ and  $g'_l(\lambda+\varepsilon)=+\infty$, hence  
 $]\lambda,\lambda+\varepsilon[\ \subset\{t\in[\lambda,\lambda+\varepsilon]:g\  \textnormal{is sub-differentiable at $t$}\}$.  
  Therefore, the map $g 1_{[\lambda,\lambda+\varepsilon]}+\infty1_{\R\setminus [\lambda,\lambda+\varepsilon]}$  is  strictly   convex on  every convex subset of the set $\{t\in[\lambda,\lambda+\varepsilon]: g 1_{[\lambda,\lambda+\varepsilon]}+\infty 1_{\R\setminus [\lambda,\lambda+\varepsilon]}\  \textnormal{is sub-differentiable at $t$}\}$, hence its  Legendre-Fenchel transform  is differentiable    on  the interior of its effective domain   (\cite{Rockafellar_Wets}, Theorem 11.13); consequently,   $\overline{L}^*_{\mid ]\overline{L}_r'(\lambda^+), \overline{L}_l'(\lambda+\varepsilon)[}$ is strictly convex. 
    Assume that ${\tilde{\lambda}}<\lambda+\varepsilon$ and $\overline{L}_{\mid ]{\tilde{\lambda}},\lambda+\varepsilon[}$ is strictly convex. The above proof and  the proof of part $a)$   work verbatim replacing $]\lambda,\lambda+\varepsilon[$ by $]{\tilde{\lambda}},\lambda+\varepsilon[$.  
  Since $\overline{L}$ is differentiable at ${\tilde{\lambda}}$, we have $\overline{L}'_r(\lambda^+)=\overline{L}'_r({\tilde{\lambda}}^+)$, which proves 
  the second part of the second assertion.
   \end{proof}

\begin{proof-theo} 
Put $x=\overline{L}_r'(\lambda^+)$. 
The hypotheses imply  that $\overline{L}_{\mid [0,+\infty[}$  is proper 
(otherwise, by Lemma \ref{existence L on t}$a)$,  $\overline{L}'_r(t)\in\{-\infty,+\infty\}$ for all $t>0$,  which contradicts the hypothesis when $x>-\infty$, and $\overline{L}_{\mid ]0,t[}=-\infty$ for all $t>0$ small enough,  which contradicts the hypothesis when $x=-\infty$),  and  the case $(i)$ of Lemma \ref{existence L on t}$b)$  holds; in particular, $x<+\infty$ and there exists $\varepsilon>0$ such that  $\overline{L}_{\mid [\lambda,\lambda+\varepsilon[}$ is  real-valued  and $\overline{L}_{\mid ]\lambda,\lambda+\varepsilon]}$ is continuous. 
       Let  $(x_\alpha, y_\alpha)$ be a net  in $[-\infty,+\infty]^2$  such that $\lim_\alpha x_\alpha=x$ and $\liminf_\alpha y_\alpha>x$. Put $y=\liminf_\alpha y_\alpha$.

\bigskip
      
 $\bullet$ First   assertion,  the case  $x\in\R$:  For each $t\in\ ]\lambda,\lambda+\varepsilon[$, 
 Chebyshev's inequality  yields eventually
\[
   c_\alpha\log\int_{\R} e^{c_\alpha^{-1} t z}\mu_\alpha(dz)
  \ge 
  c_\alpha\log( e^{c_\alpha^{-1} t x_\alpha}\mu_\alpha([x_\alpha,y_\alpha]))
=t x_\alpha + c_\alpha\log\mu_\alpha([x_\alpha,y_\alpha])
\]
hence
\[
   \limsup_\alpha c_\alpha\log\mu_\alpha([x_\alpha,y_\alpha])\le \overline{L}(t)-tx,\]
 and letting $t\rightarrow\lambda$,
 \begin{equation}\label{end-proof-eq5}
\limsup_\alpha c_\alpha\log\mu_\alpha([x_\alpha,y_\alpha])\le\overline{L}(\lambda^+)-\lambda x=-\overline{L}^*(x),
 \end{equation}
 where the last equality follows from   Lemma \ref{inf L*}  when $\lambda=0$.
 Suppose that  
 \[
 \limsup_\alpha c_\alpha\log\mu_\alpha(]x_\alpha,y_\alpha[)< -\overline{L}^*(x).
 \]
    The hypothesis together with    the continuity of $\overline{L}^*$ on its effective domain implies the existence of      $t>\lambda$  and $\delta>0$   such that  
       \[x+2\delta<\overline{L}'_r(t)<y-2\delta\] and 
   \begin{equation}\label{end-proof-eq7}
   \limsup_\alpha c_\alpha\log\mu_\alpha(]x_\alpha,y_\alpha[)< -\overline{L}^*(\overline{L}'_r(t)).
   \end{equation}
 Since  eventually   \[x_\alpha+\delta<\overline{L}'_r(t)<y_\alpha-\delta,\]
     we have  eventually   
     \[\mu_\alpha(]x_\alpha,y_\alpha[)\ge \mu_\alpha(]\overline{L}'_r(t)-\delta,\overline{L}'_r(t)+\delta[)\]  hence 
    \[\limsup_\alpha c_\alpha\log\mu_\alpha(]x_\alpha,y_\alpha[\ge -l_0(\overline{L}'_r(t)),\]
    which  contradicts (\ref{end-proof-eq7}) since $l_0(\overline{L}'_r(t))=\overline{L}^*(\overline{L}'_r(t))$ by Lemma \ref{l0=l1=L*-on-left-right derivative}; therefore, we have
\[\limsup_\alpha c_\alpha\log\mu_\alpha(]x_\alpha,y_\alpha[)\ge -\overline{L}^*(x),\]
which together with (\ref{end-proof-eq5}) proves the first three  equalities of the first assertion; the last equality is obvious when $\lambda>0$ (definition of $\overline{L}^*$), and  follows from  Lemma \ref{inf L*} when $\lambda=0$.

\bigskip

$\bullet$ First assertion,   the case  $x=-\infty$:   Lemma \ref{improper-L} implies $\lambda=0$.   Let $(t_i)$ be a sequence of positive numbers converging to $0$, so that 
    eventually 
\[l_0(\overline{L}'_r(t_i))=\overline{L}^*(\overline{L}'_r(t_i))\]  
 by Lemma \ref{l0=l1=L*-on-left-right derivative}.
 Since  $\lim_i\overline{L}'_r(t_i)=-\infty$,  there exists $\delta>0$ such that  $x_\alpha+\delta<\overline{L}'_r(t_i)<y_\alpha-\delta$  eventually with respect to $i$  and eventually with respect to $\alpha$, which together with the above equality yields
  \begin{multline*}
  0\ge\limsup_\alpha c_\alpha\log\mu_\alpha([x_\alpha,y_\alpha])\ge
\limsup_\alpha c_\alpha\log\mu_\alpha(]x_\alpha,y_\alpha[)\ge\limsup_i (-l_0(\overline{L}'_r(t_i))
\\
=\limsup_i -(\overline{L}^*(\overline{L}'_r(t_i)))
=-\lim_i\overline{L}^*(\overline{L}'_r(t_i))=\overline{L}(0^+)=0,
 \end{multline*} 
 where the third equality is given by Lemma \ref{inf L*}, and the last equality follows from the hypothesis of 
   right continuity of 
$\overline{L}$  at zero.   The first two equalities of the first assertion  follow from the above expression  together with Lemma \ref{inf L*} (recall that by convention, $0\cdot (-\infty)=0$).
  For 
 each $i\in\N$,   $t_i$ is a subgradient  of $\overline{L}^*$ at $\overline{L}'_r(t_i)$ so that  $\overline{L}^*$ is non-decreasing on $\left]-\infty, \overline{L}'_r(t_i)\right]$; 
 since $\lim_\alpha x_\alpha=-\infty$,   eventually   $x_\alpha$ belongs to the effective domain of $\overline{L}^*$ and fullfils 
  \[0\le\overline{L}^*(x_\alpha)\le\overline{L}^*(\overline{L}'_r(t_i)),\]
  hence 
  \[0\le\liminf_\alpha\overline{L}^*(x_\alpha)\le\limsup_\alpha\overline{L}^*(x_\alpha)\le\overline{L}^*(\overline{L}'_r(t_i));\] 
letting $i\rightarrow+\infty$ gives $\lim_\alpha\overline{L}^*(x_\alpha)=0$, which proves the last two equalities of the first assertion.
The proof of the first assertion is complete.

\bigskip

$\bullet$ Second assertion: Let $(t_i)$ be a  sequence  in $]{\tilde{\lambda}},+\infty[$  converging  to ${\tilde{\lambda}}$ such that 
   $L(t_i)$ exists for all $i\in\N$. Let $(\mu_\beta,c_\beta,x_\beta,y_\beta)$ be a subnet of  $(\mu_\alpha,c_\alpha,x_\alpha,y_\alpha)$. 
   For each $t\in\R$ we put
\[\overline{L}^{(\mu_\beta,c_\beta)}(t)=\limsup_\beta c_\beta\log\int_\R e^{c_\beta^{-1} t x}\mu_\beta(dx).\]
Since  $\overline{L}^{(\mu_\beta,c_\beta)}(t_i)=L(t_i)$ for all $i\in\N$,
    we have 
 \begin{equation}\label{end-proof-eq20}
 \overline{L}^{(\mu_\beta,c_\beta)}({\tilde{\lambda}}^+)=\lim_i \overline{L}^{(\mu_\beta,c_\beta)}(t_i)=\lim_i\overline{L}(t_i)=\overline{L}({\tilde{\lambda}}^+)
 \end{equation}
  hence 
    \begin{equation}\label{end-proof-eq30}
{\overline{L}^{(\mu_\beta,c_\beta)}}'_r({\tilde{\lambda}}^+)
 =\lim_{i}\frac{\overline{L}^{(\mu_\beta,c_\beta)}(t_i)-\overline{L}^{(\mu_\beta,c_\beta)}({\tilde{\lambda}}^+)}{t_i-\lambda}
 \\
 =
\lim_{i}\frac{\overline{L}(t_i)-\overline{L}({\tilde{\lambda}}^+)}{t_i-\lambda}=\overline{L}'_r({\tilde{\lambda}}^+)=x,
\end{equation}
where the last equality follow from  Lemma \ref{existence L on t}$b)$. The inequality ${\overline{L}^{(\mu_\beta,c_\beta)}}'_r (\lambda^+)\le{\overline{L}^{(\mu_\beta,c_\beta)}}'_r ({\tilde{\lambda}}^+)$ together with (\ref{end-proof-eq30}) implies
    \begin{equation}\label{end-proof-eq32}
{\overline{L}^{(\mu_\beta,c_\beta)}}'_r (\lambda^+)\le x.
\end{equation}
We have  $\overline{L}'_r(t_i)>\overline{L}'_r({\tilde{\lambda}}^+)$ for all $i\in\N$ (by definition of ${\tilde{\lambda}}$)  and  
    \begin{equation}\label{end-proof-eq42}
    x=\lim_i\overline{L}'_r(t_i),
     \end{equation} so that 
 $(\overline{L}'_r(t_i))$ has a strictly decreasing subsequence $(\overline{L}'_r(t_{i_j}))$ converging to $\overline{L}'_r({\tilde{\lambda}}^+)$, which implies eventually $t_{i_{j+1}}< t_{i_j}< t_{i_{j-1}}$, hence 
  \begin{equation}\label{end-proof-eq45}
x=\lim_j\overline{L}'_l(t_{i_j})
\end{equation}
  and  
  \begin{equation}\label{end-proof-eq47}
\forall j\in\N,\ \ \ \ \ \ \ \ \overline{L}'_l(t_{i_j})>x.
\end{equation}
Put  ${\tilde{\lambda}}^{(\mu_\beta,c_\beta)}=\sup\{t>\lambda: \overline{L}^{(\mu_\beta,c_\beta)}_{\mid ]\lambda,t]}\ \textnormal{is affine}\}$.
The inequality  $\overline{L}^{(\mu_\beta,c_\beta)}\le \overline{L}$ together with (\ref{end-proof-eq20})  implies
\begin{equation}\label{end-proof-eq48}
{\tilde{\lambda}}^{(\mu_\beta,c_\beta)}\ge {\tilde{\lambda}}.
\end{equation}

\begin{claim}\label{claim 1} 
When  ${\overline{L}^{(\mu_\beta,c_\beta)}}'_r(\lambda^+)>-\infty$, the     hypothesis  of Theorem \ref{strength-Plachky-Steinebach} holds with the net $(\mu_\beta,c_\beta)$ in place of $(\mu_\alpha,c_\alpha)$. Furthermore, we have 
$\overline{L}^{(\mu_\beta,c_\beta)}(\lambda^+)=\overline{L}(\lambda^+)$ and ${\overline{L}^{(\mu_\beta,c_\beta)}}'_r (\lambda^+)=x$.
\end{claim}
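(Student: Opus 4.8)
The plan is to show that the subnet's generalized log-moment generating function $\overline{L}^\beta:=\overline{L}^{(\mu_\beta,c_\beta)}$ coincides with $\overline{L}$ throughout the affine stretch $]\lambda,{\tilde{\lambda}}[$ — so that it inherits the two values $\overline{L}(\lambda^+)$ and $x:=\overline{L}'_r(\lambda^+)$ at $\lambda$ — and then to deduce the limit-point hypothesis for $(\mu_\beta,c_\beta)$ by comparing one-sided derivatives of $\overline{L}^\beta$ and $\overline{L}$ along the sequence $(t_i)$.

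First I would record the elementary facts. Since $(\mu_\beta,c_\beta)$ is a subnet of $(\mu_\alpha,c_\alpha)$ and $L(t_i)$ exists, the net defining $\overline{L}(t_i)$ converges and hence so does its subnet, giving $\overline{L}^\beta(t_i)=L(t_i)=\overline{L}(t_i)$ for every $i$; moreover $\overline{L}^\beta\le\overline{L}$ pointwise and $\overline{L}^\beta(0)=0$. Discarding finitely many $t_i$, I may assume $\overline{L}(t_i)\in\R$ (legitimate since $t_i\to{\tilde{\lambda}}$ and $\overline{L}$ is real-valued on a right neighbourhood of ${\tilde{\lambda}}$, by (\ref{end-proof-eq30})). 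Then $\overline{L}^\beta$ is finite at $0$ and at $t_1$, so Lemma \ref{existence L on t}$a)$ gives that $\overline{L}^\beta_{\mid[0,+\infty[}$ is proper; hence by Lemma \ref{improper-L} the map ${\overline{L}^\beta}'_r$ is non-decreasing, $]-\infty,+\infty]$-valued and right continuous on $]0,+\infty[$, and $\overline{L}^\beta$ is real-valued — thus continuous — on $]0,t_1[$, which contains $]\lambda,{\tilde{\lambda}}[$ together with a right neighbourhood of ${\tilde{\lambda}}$.

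Next I would establish the two equalities of the ``Furthermore'' part. By (\ref{end-proof-eq32}), ${\overline{L}^\beta}'_r(\lambda^+)\le x$. If $x=-\infty$ this forces ${\overline{L}^\beta}'_r(\lambda^+)=-\infty$, and since $\overline{L}'_r(\lambda^+)=\overline{L}'_r({\tilde{\lambda}}^+)=-\infty$ Lemma \ref{improper-L} forces ${\tilde{\lambda}}=\lambda=0$, so (\ref{end-proof-eq20}) reads $\overline{L}^\beta(\lambda^+)=\lim_i\overline{L}^\beta(t_i)=\lim_i\overline{L}(t_i)=\overline{L}(\lambda^+)$, and the first assertion of the claim is vacuous. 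Assume now $x\in\R$. If ${\tilde{\lambda}}=\lambda$, then (\ref{end-proof-eq20}) and (\ref{end-proof-eq30}) give $\overline{L}^\beta(\lambda^+)=\overline{L}(\lambda^+)$ and ${\overline{L}^\beta}'_r(\lambda^+)={\overline{L}^\beta}'_r({\tilde{\lambda}}^+)=x$ at once. If ${\tilde{\lambda}}>\lambda$, I would use that $\overline{L}$ is affine with slope $x$ on $]\lambda,{\tilde{\lambda}}]$ (by definition of ${\tilde{\lambda}}$ and $\overline{L}'_r(\lambda^+)=\overline{L}'_r({\tilde{\lambda}}^+)=x$), that continuity of $\overline{L}$ and of $\overline{L}^\beta$ at ${\tilde{\lambda}}>0$ together with (\ref{end-proof-eq20}) gives $\overline{L}^\beta({\tilde{\lambda}})=\overline{L}({\tilde{\lambda}})$, and that ${\overline{L}^\beta}'_l({\tilde{\lambda}})\le{\overline{L}^\beta}'_r({\tilde{\lambda}})={\overline{L}^\beta}'_r({\tilde{\lambda}}^+)=x$ by right continuity and (\ref{end-proof-eq30}); the subgradient inequality for $\overline{L}^\beta$ at ${\tilde{\lambda}}$ then yields, for $t\in\ ]\lambda,{\tilde{\lambda}}[$, that $\overline{L}^\beta(t)\ge\overline{L}^\beta({\tilde{\lambda}})+{\overline{L}^\beta}'_l({\tilde{\lambda}})(t-{\tilde{\lambda}})\ge\overline{L}({\tilde{\lambda}})+x(t-{\tilde{\lambda}})=\overline{L}(t)$, which with $\overline{L}^\beta\le\overline{L}$ squeezes $\overline{L}^\beta=\overline{L}$ on $]\lambda,{\tilde{\lambda}}[$; hence ${\overline{L}^\beta}'_r=x$ there and $\overline{L}^\beta(\lambda^+)=\overline{L}(\lambda^+)$, ${\overline{L}^\beta}'_r(\lambda^+)=x$. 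In particular ${\overline{L}^\beta}'_r(\lambda^+)>-\infty$ exactly when $x>-\infty$, so only that case remains.

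Finally, for the first assertion (now $x>-\infty$), I would check that $x={\overline{L}^\beta}'_r(\lambda^+)$ is a limit point of $\{{\overline{L}^\beta}'_r(t):t>\lambda\}$ using the strictly decreasing subsequence $(t_{i_j})$ with $t_{i_{j+1}}<t_{i_j}<t_{i_{j-1}}$ built before (\ref{end-proof-eq45}). Since $\overline{L}^\beta$ and $\overline{L}$ agree at the $t_{i_j}$, their common secant slope $m_j:=(\overline{L}(t_{i_j})-\overline{L}(t_{i_{j+1}}))/(t_{i_j}-t_{i_{j+1}})$ over $[t_{i_{j+1}},t_{i_j}]$ satisfies, by convexity of $\overline{L}$ and of $\overline{L}^\beta$, the chain $\overline{L}'_r(t_{i_{j+1}})\le m_j\le{\overline{L}^\beta}'_r(t_{i_j})\le m_{j-1}\le\overline{L}'_l(t_{i_{j-1}})$. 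By (\ref{end-proof-eq42}) and (\ref{end-proof-eq45}) both outer terms tend to $x$, whereas $\overline{L}'_r(t_{i_{j+1}})>\overline{L}'_r({\tilde{\lambda}}^+)=x$ by definition of ${\tilde{\lambda}}$; hence ${\overline{L}^\beta}'_r(t_{i_j})$ decreases to $x$ with all values strictly above $x$, and since $t_{i_j}>{\tilde{\lambda}}\ge\lambda$ this exhibits $x$ as a limit point of $\{{\overline{L}^\beta}'_r(t):t>\lambda\}$, which is the hypothesis of Theorem \ref{strength-Plachky-Steinebach} for $(\mu_\beta,c_\beta)$. The step I expect to be the main obstacle is the squeeze on $]\lambda,{\tilde{\lambda}}[$ when ${\tilde{\lambda}}>\lambda$: one must combine the affineness of $\overline{L}$ there with the subnet's derivative at ${\tilde{\lambda}}$ to exclude $\overline{L}^\beta$ dropping strictly below $\overline{L}$ with a steeper slope near $\lambda$, the properness/continuity preliminaries being exactly what makes this clean; the degenerate case $\lambda={\tilde{\lambda}}=0$ has to be peeled off separately.
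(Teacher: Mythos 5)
Your proof is correct and follows essentially the same route as the paper: you force $\overline{L}^{(\mu_\beta,c_\beta)}=\overline{L}$ on $\left]\lambda,{\tilde{\lambda}}\right]$ by convexity (your subgradient squeeze at ${\tilde{\lambda}}$ is the paper's chain of equalities (\ref{end-proof-eq33})--(\ref{end-proof-eq35}) in disguise), and then exhibit the limit point via derivative comparison at the $t_{i_j}$ (your secant slopes $m_j$ play the role of the paper's one-sided derivative sandwich leading to (\ref{end-proof-eq57})). The only cosmetic caveat is that properness of $\overline{L}^{(\mu_\beta,c_\beta)}_{\mid[0,+\infty[}$ is best justified by finiteness at \emph{two} distinct points $t_1>t_2>0$ rather than at $0$ and $t_1$ alone, but this is immediate from your setup.
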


\begin{proof-claim 1}
 Assume  ${\overline{L}^{(\mu_\beta,c_\beta)}}'_r (\lambda^+)>-\infty$. Since   $x>-\infty$ by (\ref{end-proof-eq32}) we have 
\begin{multline*}
x({\tilde{\lambda}}-\lambda)=\overline{L}({\tilde{\lambda}}^+)-\overline{L}(\lambda^+)\le \overline{L}({\tilde{\lambda}}^+)-\overline{L}^{(\mu_\beta,c_\beta)}(\lambda^+)=\overline{L}^{(\mu_\beta,c_\beta)}({\tilde{\lambda}}^+)-\overline{L}^{(\mu_\beta,c_\beta)}(\lambda^+)
\\
\le {\overline{L}^{(\mu_\beta,c_\beta)}}'_l({\tilde{\lambda}}^+)({\tilde{\lambda}}-\lambda)\le{\overline{L}^{(\mu_\beta,c_\beta)}}'_r({\tilde{\lambda}}^+)({\tilde{\lambda}}-\lambda)=x({\tilde{\lambda}}-\lambda),
\end{multline*}
where the second equality follows from (\ref{end-proof-eq20}) and the last equality follows from (\ref{end-proof-eq30}); therefore,  
all the above inequalities are equalities, which gives 
\begin{equation}\label{end-proof-eq33}
\overline{L}^{(\mu_\beta,c_\beta)}(\lambda^+)=\overline{L}(\lambda^+),
\end{equation}
which together with (\ref{end-proof-eq20}) implies 
\begin{equation}\label{end-proof-eq35}
\overline{L}^{(\mu_\beta,c_\beta)}_{\mid ]\lambda,{\tilde{\lambda}}]}=\overline{L}_{\mid ]\lambda,{\tilde{\lambda}}]}.
\end{equation}
Since $\overline{L}^{(\mu_\beta,c_\beta)}\le\overline{L}$ with $\overline{L}$ convex, (\ref{end-proof-eq35}) implies
\begin{equation}\label{end-proof-eq50}
{\tilde{\lambda}}^{(\mu_\beta,c_\beta)}={\tilde{\lambda}}.
\end{equation}
We have 
\begin{multline*}
\lim_j \overline{L}'_l(t_{i_j})\le\liminf_j {\overline{L}^{(\mu_\beta,c_\beta)}}'_l(t_{i_j})\le\liminf_j {\overline{L}^{(\mu_\beta,c_\beta)}}'_r(t_{i_j})\le  \limsup_j {\overline{L}^{(\mu_\beta,c_\beta)}}'_r(t_{i_j})
\\
\le \lim_j \overline{L}'_r(t_{i_j}),
\end{multline*}
hence by  (\ref{end-proof-eq42}) and  (\ref{end-proof-eq45}), all the above inequalities are equalities, which together with  (\ref{end-proof-eq30})  yields
  \[\forall j\in\N,\ \ \ \ 
{\overline{L}^{(\mu_\beta,c_\beta)}}'_r(t_{i_j})\ge {\overline{L}^{(\mu_\beta,c_\beta)}}'_l(t_{i_j})\ge
  \overline{L}'_l(t_{i_j})>\lim_j {\overline{L}^{(\mu_\beta,c_\beta)}}'_r(t_{i_j})=x,
 \]
 where the strict inequality follows from   (\ref{end-proof-eq47}). 
 The above expression together with (\ref{end-proof-eq30}) and  (\ref{end-proof-eq50}) gives
 \begin{equation}\label{end-proof-eq57}
 \forall j\in\N,\ \ \ \ \  {\overline{L}^{(\mu_\beta,c_\beta)}}'_r(t_{i_j})>\lim_j {\overline{L}^{(\mu_\beta,c_\beta)}}'_r(t_{i_j})={\overline{L}^{(\mu_\beta,c_\beta)}}'_r\left({\tilde{\lambda}}^{(\mu_\beta,c_\beta)}\right),
 \end{equation}
 which proves the first assertion of the claim. 
 
 The first equality of the second assertion  is given by  (\ref{end-proof-eq33}); the second  equality of the second assertion is given by (\ref{end-proof-eq30}) when $\lambda={\tilde{\lambda}}$.  Assume   $\lambda<{\tilde{\lambda}}$. Then,  (\ref{end-proof-eq35}) and     (\ref{end-proof-eq50})  yield
   \begin{equation}\label{end-proof-eq60}
  {\overline{L}^{(\mu_\beta,c_\beta)}}'_r({\tilde{\lambda}}^+)={\overline{L}^{(\mu_\beta,c_\beta)}}'_r (\lambda^+).
   \end{equation}
   Since $\overline{L}$ is differentiable at ${\tilde{\lambda}}$ when ${\tilde{\lambda}}>\lambda$, we have
   \[{\overline{L}^{(\mu_\beta,c_\beta)}}'_r (\lambda^+)\le x=
   \overline{L}'_l ({\tilde{\lambda}}^+)\le{\overline{L}^{(\mu_\beta,c_\beta)}}'_l({\tilde{\lambda}}^+)\le{\overline{L}^{(\mu_\beta,c_\beta)}}'_r({\tilde{\lambda}}^+)\]
   (where  (\ref{end-proof-eq33}) is used to obtain the first inequality), 
   hence by  (\ref{end-proof-eq60}) all the above inequalities are equalities, and the 
   second  equality of the second assertion follows.
    \end{proof-claim 1}

\begin{claim}\label{claim 2} 
When  ${\overline{L}^{(\mu_\beta,c_\beta)}}'_r(\lambda^+)=-\infty$, the     hypothesis  of Theorem \ref{strength-Plachky-Steinebach} holds with the net $(\mu_\beta,c_\beta)$ in place of $(\mu_\alpha,c_\alpha)$; in particular, $\overline{L}^{(\mu_\beta,c_\beta)}(\lambda^+)=\overline{L}(\lambda^+)$ and ${\overline{L}^{(\mu_\beta,c_\beta)}}'_r (\lambda^+)=x$.
\end{claim}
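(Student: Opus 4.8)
The plan is to prove that, under the standing assumption ${\overline{L}^{(\mu_\beta,c_\beta)}}'_r(\lambda^+)=-\infty$, one is forced into the configuration ${\tilde{\lambda}}=\lambda=0$, $x=-\infty$ and $\overline{L}(\lambda^+)=0$; granting this, the hypothesis of Theorem \ref{strength-Plachky-Steinebach} relative to $(\mu_\beta,c_\beta)$ reduces to $\overline{L}^{(\mu_\beta,c_\beta)}(\lambda)=\overline{L}^{(\mu_\beta,c_\beta)}(\lambda^+)$, which will have been checked, and the two displayed identities drop out of (\ref{end-proof-eq20}) and (\ref{end-proof-eq30}).

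First I would record that $\overline{L}^{(\mu_\beta,c_\beta)}_{\mid [0,+\infty[}$ is proper. Indeed $\overline{L}^{(\mu_\beta,c_\beta)}(t_i)=L(t_i)=\overline{L}(t_i)\in\R$ for $i$ large (recall from the opening of the proof that $\overline{L}$ is real-valued on a right-neighbourhood of ${\tilde{\lambda}}$, by case $(i)$ of Lemma \ref{existence L on t}$b)$), so a convex function with value $0$ at $0$ and a finite value at some $t_i>0$ cannot obey the dichotomy of Lemma \ref{existence L on t}$a)$. By Lemma \ref{improper-L} the map ${\overline{L}^{(\mu_\beta,c_\beta)}}'_r$ is then $]-\infty,+\infty]$-valued and non-decreasing on $]0,+\infty[$, so ${\overline{L}^{(\mu_\beta,c_\beta)}}'_r(\lambda^+)\ge{\overline{L}^{(\mu_\beta,c_\beta)}}'_r(\lambda)>-\infty$ whenever $\lambda>0$; the assumption therefore yields $\lambda=0$.

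The crux is to exclude ${\tilde{\lambda}}>\lambda$. Suppose ${\tilde{\lambda}}>\lambda$; then $\overline{L}_{\mid ]\lambda,{\tilde{\lambda}}]}$ is affine, hence $x=\overline{L}'_r(\lambda^+)$ is finite, and consequently every quantity occurring in the chain of (in)equalities of the proof of Claim \ref{claim 1} is finite — the sole appeal there to the inequality $x>-\infty$ being now automatic. That chain then applies verbatim and gives $\overline{L}^{(\mu_\beta,c_\beta)}(\lambda^+)=\overline{L}(\lambda^+)$, whence, exactly as in the passage leading to (\ref{end-proof-eq35}), $\overline{L}^{(\mu_\beta,c_\beta)}_{\mid ]\lambda,{\tilde{\lambda}}]}=\overline{L}_{\mid ]\lambda,{\tilde{\lambda}}]}$; since this restriction is affine with slope $x$, we get ${\overline{L}^{(\mu_\beta,c_\beta)}}'_r(\lambda^+)=x>-\infty$, contradicting the hypothesis of the claim. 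Hence ${\tilde{\lambda}}=\lambda=0$, so (\ref{end-proof-eq20}) and (\ref{end-proof-eq30}) reduce to $\overline{L}^{(\mu_\beta,c_\beta)}(\lambda^+)=\overline{L}(\lambda^+)$ and ${\overline{L}^{(\mu_\beta,c_\beta)}}'_r(\lambda^+)=x$, and the latter combined with the assumption forces $x=-\infty$.

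To conclude I would invoke the hypothesis made on the original net $(\mu_\alpha,c_\alpha)$: since $x=\overline{L}'_r(\lambda^+)=-\infty$ and $\lambda=0$, it reads $\overline{L}(\lambda)=\overline{L}(\lambda^+)$, i.e.\ $\overline{L}(\lambda^+)=0$. Therefore $\overline{L}^{(\mu_\beta,c_\beta)}(\lambda)=0=\overline{L}(\lambda^+)=\overline{L}^{(\mu_\beta,c_\beta)}(\lambda^+)$, which is precisely the hypothesis of Theorem \ref{strength-Plachky-Steinebach} for $(\mu_\beta,c_\beta)$ in the subcase ${\overline{L}^{(\mu_\beta,c_\beta)}}'_r(\lambda^+)=-\infty$, while the identities $\overline{L}^{(\mu_\beta,c_\beta)}(\lambda^+)=\overline{L}(\lambda^+)$ and ${\overline{L}^{(\mu_\beta,c_\beta)}}'_r(\lambda^+)=x$ were obtained above. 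The only genuinely new ingredient relative to Claim \ref{claim 1}, and the step I expect to require the most care, is the elimination of ${\tilde{\lambda}}>\lambda$: one must notice that the affineness of $\overline{L}$ on $]\lambda,{\tilde{\lambda}}]$ makes $x$ finite, which revives the Claim \ref{claim 1} argument unchanged, and then read off the contradiction between the finiteness of $x$ and the equality ${\overline{L}^{(\mu_\beta,c_\beta)}}'_r(\lambda^+)=-\infty$; the rest is bookkeeping with Lemmas \ref{existence L on t} and \ref{improper-L} and with the limits (\ref{end-proof-eq20}) and (\ref{end-proof-eq30}).
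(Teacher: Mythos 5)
Your proof is correct, and its overall skeleton (reduce to ${\tilde{\lambda}}=\lambda=0$, deduce $x=-\infty$ from (\ref{end-proof-eq30}), read off $\overline{L}(\lambda^+)=0$ from the hypothesis of Theorem \ref{strength-Plachky-Steinebach}, and conclude via (\ref{end-proof-eq20})) is the same as the paper's. The one step you handle differently is the elimination of ${\tilde{\lambda}}>\lambda$: you argue by contradiction, observing that affineness of $\overline{L}$ on $]\lambda,{\tilde{\lambda}}]$ makes $x$ finite and then re-running the whole chain of (in)equalities from Claim \ref{claim 1} to force ${\overline{L}^{(\mu_\beta,c_\beta)}}'_r(\lambda^+)=x>-\infty$. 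The paper gets the same conclusion in one line: ${\overline{L}^{(\mu_\beta,c_\beta)}}'_r(\lambda^+)=-\infty$ directly forces ${\tilde{\lambda}}^{(\mu_\beta,c_\beta)}=0$ (an affine piece of $\overline{L}^{(\mu_\beta,c_\beta)}$ on some $]0,t]$ would have finite slope, hence finite right derivative at $0^+$), and then (\ref{end-proof-eq48}) gives ${\tilde{\lambda}}\le{\tilde{\lambda}}^{(\mu_\beta,c_\beta)}=0$, so ${\tilde{\lambda}}=\lambda=0$. Your detour through Claim \ref{claim 1} is valid (all the quantities in that chain are indeed finite in your sub-case, so the argument does apply verbatim), but it is heavier than necessary; the monotonicity relation (\ref{end-proof-eq48}) was set up precisely so that this case could be dispatched without revisiting the Claim \ref{claim 1} machinery. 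The preliminary verification that $\overline{L}^{(\mu_\beta,c_\beta)}_{\mid[0,+\infty[}$ is proper and that $\lambda=0$ (via Lemmas \ref{existence L on t}$a)$ and \ref{improper-L}) matches what the paper leaves implicit.
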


 \begin{proof-claim 2}  
The hypothesis implies  $\lambda=0$ and  $\tilde{\lambda}^{(\mu_\beta,c_\beta)}=0$, hence  $\tilde{\lambda}=0$ by (\ref{end-proof-eq48}); therefore,       $x=-\infty$ by (\ref{end-proof-eq30}), which implies   $\overline{L}(\lambda^+)=0$ by hypothesis of Theorem \ref{strength-Plachky-Steinebach}; this last equality together with 
 (\ref{end-proof-eq20}) yields
  $\overline{L}^{(\mu_\beta,c_\beta)}(\lambda^+)=0$, which proves the claim.
    \end{proof-claim 2}
    
    By Claim \ref{claim 1}  and Claim  \ref{claim 2} we can apply  the first assertion  of Theorem \ref{strength-Plachky-Steinebach} with the net  $(\mu_\beta,c_\beta,x_\beta,y_\beta)$ in place  of  $(\mu_\alpha,c_\alpha,x_\alpha,y_\alpha)$, which gives
  \begin{multline*}
  \limsup_\beta c_\beta\log\mu_\beta([x_\beta,y_\beta[)=\limsup_\beta c_\beta\log\mu_\beta(]x_\beta,y_\beta[)
  =  \overline{L}^{(\mu_\beta,c_\beta)}(\lambda^+)-\lambda{\overline{L}^{(\mu_\beta,c_\beta)}}_r'(\lambda^+)
  \\
  =  \overline{L}(\lambda^+)-\lambda x.
  \end{multline*} 
  Since    the right hand side  of the above last equality  does not depend on $(\mu_\beta,c_\beta,x_\beta,y_\beta)$,  and  
     $(\mu_\beta,c_\beta,x_\beta,y_\beta)$ is an arbitrary subnet of  $(\mu_\alpha,c_\alpha,x_\alpha,y_\alpha)$,  the second  assertion follows.
    \end{proof-theo}

\begin{proof-corollary}
 Let $t\in[\lambda,\lambda+\varepsilon[$ and put ${\tilde{t}}=\sup\{s>t: \overline{L}_{\mid ]t,s]}\ \textnormal{is   affine}\}$ 
 (note that ${\tilde{t}}$ is a maximum since $\overline{L}_{\mid ]\lambda,\lambda+\varepsilon[}$ is continuous).
 Since (by hypothesis) $\sup\left\{\overline{L}'(t): t\in\ ]\lambda,\lambda+\varepsilon[\right\}$ is not a maximum, we have ${\tilde{t}}<\lambda+\varepsilon$. 
For each $\delta>0$,  the map  $\overline{L}_{\mid [{\tilde{t}},{\tilde{t}}+\delta[}$ is not affine 
(otherwise, since $\overline{L}$ is differentiable at ${\tilde{t}}$,  the slope of $\overline{L}_{\mid [{\tilde{t}},{\tilde{t}}+\delta[}$ would be the same as the one  of $\overline{L}_{\mid ]t,{\tilde{t}}]}$, which would contradict  the definition of ${\tilde{t}}$).  Therefore, 
we have   $\overline{L}'(s)>\overline{L}'({\tilde{t}}^+)$ for all $s>{\tilde{t}}$; since  $\overline{L}'(t^+)=\overline{L}'({\tilde{t}}^+)$ (because   $\overline{L}_{\mid ]t,{\tilde{t}}]}$ is affine and $\overline{L}$ is differentiable at ${\tilde{t}}$) we get 
$\overline{L}'(s)>\overline{L}'(t^+)$ for all $s>{\tilde{t}}$. Consequently, by the last assertion of Lemma \ref{existence L on t} $b)$, 
the case $(i)$ of Lemma \ref{existence L on t}$b)$ holds, hence the hypotheses of Theorem \ref{strength-Plachky-Steinebach}  
 holds for $t$ (\ie with $\lambda=t$ in Theorem \ref{strength-Plachky-Steinebach}), and in particular  when $\overline{L}'_r(t^+)>-\infty$.
 If $\overline{L}'_r(t^+)=-\infty$, then $t=\lambda=0$ by Lemma \ref{improper-L}; 
 since in this case $\overline{L}$  is assumed to be right continuous at zero,  the hypotheses of Theorem \ref{strength-Plachky-Steinebach} holds for $t$.  Lemma \ref{lemma-case-differentiable}$a)$  ensures that for each  
  $z\in\left[\overline{L}_r'(\lambda^+),\overline{L}_l'(\lambda+\varepsilon)\right[$  there exists $t_z\in[\lambda,\lambda+\varepsilon[$   such that  $z=\overline{L}_r'(t_z^+)$, 
 hence   part $a)$  follows  applying Theorem \ref{strength-Plachky-Steinebach} for all $t\in[\lambda,\lambda+\varepsilon[$. 
  Part $b)$ follows from Lemma        \ref{lemma-case-differentiable} $b)$.
   \end{proof-corollary}

\section*{Appendix}\label{generic-example-not-diff-llmgf}

\bigskip

Let $\lambda\ge 0$, let  $\varepsilon>0$ and let $f$ be a real-valued strictly convex and continuous  function  on  $[0,\lambda+\varepsilon]$ such that $f(0)=0$. In what follows, we show how  from such a function $f$ it is possible to build 
  a generalized    log-moment generating function $L_f$ with effective domain $[0,\lambda+\varepsilon]$,    such that $L_f(t)$  is a limit for all $t\in\R$,  $L_f$ is not differentiable at $\lambda$,  ${L_f}_{\mid [0,\lambda]}$  is affine,   $\lambda$ is  a limit of a decreasing sequence $(\lambda_i)_{i\ge 1}$ of non-differentiability  points  of ${L_f}_{\mid]\lambda,\lambda+\varepsilon[}$ and  ${L_f}_{\mid [\lambda_i,\lambda_{i-1}]}$  is  affine for all $i\ge 1$ (with $\lambda_0=\lambda+\varepsilon$). Therefore,  the hypotheses of 
   Theorem \ref{strength-Plachky-Steinebach} holds for  $\lambda$,  but the usual version of Plachky-Steinebach  theorem does not  apply (either because  $\overline{L}$ is not differentiable   in a neighbourhood of $\lambda$, or because $\overline{L}$ is not strictly convex  in a neighbourhood of $\lambda$); however,  its conclusion remains true
  (\ie   both conclusions of Theorem \ref{strength-Plachky-Steinebach}  hold);   the case $\lambda=0$ and ${L_f}'_r(0^+)=-\infty$  is included.    
  
 Note that   $\lambda$ is the only point in $[0,\lambda+\varepsilon]$  to which  Theorem  \ref{strength-Plachky-Steinebach}  applies, since  (a) every $t\in[0,\lambda[\ \cup\ ]\lambda,\lambda+\varepsilon[$ fulfils the case $(ii)$ of Lemma \ref{existence L on t}$b)$, hence the excluded case $(ii)$ mentioned in  Section \ref{intro},  and (b)  $L'_r(\lambda+\varepsilon)=+\infty$, \ie $\lambda+\varepsilon$ fulfils the case $(iv)$ of  Lemma \ref{existence L on t}, hence the excluded case $(i)$ mentioned in  Section \ref{intro}.

   First, we extend $f$ to a convex lower semi-continuous function on  $\R$ by putting $f(t)=+\infty$ for all $t\not\in [0,\lambda+\varepsilon]$. Put $\lambda_0=\lambda+\varepsilon$.
  Let  $(\lambda_i)_{i\ge 1}$  be a decreasing sequence in $]\lambda,\lambda+\varepsilon[$   converging to $\lambda$. 
Draw  a line segment $D_i$ between   $(\lambda_i,f(\lambda_i)$ and $(\lambda_{i+1},f(\lambda_{i+1}))$ for all $i\in\N$, and draw a line segment $D$ between  and $(0,0)$ and  $(\lambda,f(\lambda)$. Let $L_f$ be the function whose graph coincides with the graph of $f$ (resp. $D_i$,  $D$) on $]-\infty,0[\ \cup\ ]\lambda_0,+\infty[$ 
 (resp. $[\lambda_{i+1},\lambda_i]$ for all $i\in\N$, $[0,\lambda]$). Clearly, $L_f$ is a convex  function on $\R$,    continuous on its effective domain $[0,\lambda+\varepsilon]$, affine on $[0,\lambda]$ and $[\lambda_{i+1},\lambda_i]$ for all $i\in\N$, 
 and  fulfils $L_f(0)=0$.
  The  strict convexity of $f$  ensures 
    for each $i\ge 1$ the existence of some  $t_i\in\ ]\lambda_{i+1},\lambda_i[$ such that 
 \begin{equation}\label{appendix-eq20}
 f'_r(\lambda)<f'_r(t_{i+1})<{L_f}'_r(\lambda_{i+1})<f'_r(t_i)<{L_f}'_r(\lambda_i),
 \end{equation}
 hence 
   \begin{equation}\label{appendix-eq40}
   {L_f}'_r(\lambda)=f'_r(\lambda)=\lim_i {L_f}'_r(\lambda_i)
    \end{equation}
    and  ${L_f}'_l(\lambda)<{L_f}'_r(\lambda)$ when ${L_f}'_r(\lambda)>-\infty$. 
  Therefore,  $L_f$ is not differentiable at $\lambda$, and not differentiable at $\lambda_i$ for all $i\in\N$, but (\ref{appendix-eq20}) and (\ref{appendix-eq40}) show that 
  $L_f$     fulfils the hypotheses of Theorem \ref{strength-Plachky-Steinebach}.

It remains to show that $L_f$ is  a generalized  log-moment generating function such that $L_f(t)$ is a limit for all $t\in\R$.   Let $\{z_k:k\ge 1\}$ be a countable set dense in the effective domain $\textnormal{dom } L_f^*$ of $L_f^*$,   and put 
\[\forall n\ge 1,\ \ \ \ \ \ \ \mu_{n,f}=\frac{\sum_{k=1}^n e^{-n L_f^*(z_k)}}{\sum_{k=1}^n e^{-n L_f^*(z_k)}}\delta_{z_k}.\]
Since   $L_f$ is lower semi-continuous, we have  $L_f=L_f^{**}$ (\cite{Rockafellar-70}, Theorem 23.5) \ie
\begin{equation}\label{construction-LDP-sequence-with-prescribed-llmgf-eq10}
\forall t\in\R,\ \ \ \ \ \ \ \ L_f(t)=\sup_{z\in\R}\{tz-L_f^*(z)\}=\sup_{z\in\textnormal{dom }L_f^*}\{tz-L_f^*(z)\},
\end{equation}
hence
\begin{multline*}
\forall n\ge 1,\ \ \ \ \ \ \ \ \max_{k=1}^n\{tz_k - L_f^*(z_k)\}\le n^{-1}\log\sum_{k=1}^n e^{n(tz_k - L_f^*(z_k))}
\\
\le n^{-1}\log (n \max\{e^{n(tz_k - L_f^*(z_k))}\})
 =n^{-1}\log n  + \max_{k=1}^n\{tz_k - L_f^*(z_k)\}\le n^{-1}\log n  + L_f(t)
 \end{multline*}
  and letting $n\rightarrow+\infty$,
     \begin{multline*}
\sup_{k=1}^\infty\{tz_k - L_f^*(z_k)\}\le\liminf_n n^{-1}\log\sum_{k=1}^n e^{n(tz_k - L_f^*(z_k))}
\\
\le\limsup_n n^{-1}\log\sum_{k=1}^n e^{n(tz_k - L_f^*(z_k))}\le L_f(t).
\end{multline*}
 Since the set $\{z_k:k\ge 1\}$ is dense in $\textnormal{dom\ }L_f^*$ and 
 $L_f^*$ is continuous on $\textnormal{dom\ }L_f^*$, we  get for each $t\in\R$ and for each $z\in\textnormal{dom } L_f^*$,
  \[
tz - L_f^*(z)\le\liminf_n n^{-1}\log\sum_{k=1}^n e^{n(tz_k - L_f^*(z_k))}\le
\limsup_n n^{-1}\log\sum_{k=1}^n e^{n(tz_k - L_f^*(z_k))}
\le L_f(t).
\]
hence 
 \begin{multline*}
 \forall t\in\R,\ \ \ \ \ \ \sup_{z\in\textnormal{dom }L^*}\{tz-L^*(z)\le\liminf_n n^{-1}\log\sum_{k=1}^n e^{n(tz_k - L_f^*(z_k))}
 \\
 \le\limsup_n n^{-1}\log\sum_{k=1}^n e^{n(tz_k - L_f^*(z_k))}
\le L_f(t),
 \end{multline*}
which together with (\ref{construction-LDP-sequence-with-prescribed-llmgf-eq10}) gives
\begin{equation}\label{construction-LDP-sequence-with-prescribed-llmgf-eq20}
\forall t\in\R,\ \ \ \ \ \ \lim_n n^{-1}\log\sum_{k=1}^n e^{n(tz_k - L_f^*(z_k))}= L_f(t).
\end{equation}
Since 
\[n^{-1}\log\int_\R e^{ntz}\mu_{n,f}(dz)=n^{-1}\left(\log\sum_{k=1}^n e^{n(tz_k - L_f^*(z_k))}-\log\sum_{k=1}^n e^{-nL_f^*(z_k)}\right)\]
for all $(t,n)\in\R\times\N\setminus\{0\}$, 
it follows from (\ref{construction-LDP-sequence-with-prescribed-llmgf-eq20}) that  the generalized  log-moment generating function associated with $(\mu_{n,f},n^{-1})$ is a limit for all  $t\in\R$,  and coincides with   $L_f$.
Consequently,   both conclusions of Theorem \ref{strength-Plachky-Steinebach} hold, \ie 
for every sequence $(x_n)$ converging to ${L_f}_r'(\lambda)$,  and for  every sequence $(y_n)$ fulfilling $\liminf_n y_n>{L_f}_r'(\lambda)$, we have 
\begin{multline*}
\lim_n n^{-1}\log\mu_{n,f}([x_n,y_n])=\lim_n n^{-1}\log\mu_{n,f}(]x_n,y_n[)=L_f(\lambda)-\lambda {L_f}_r'(\lambda)
       \\
       = \left\{\begin{array}{ll}
  -{L_f}^*({L_f}_r'(\lambda))=\inf_{t>0}\{{L_f}(t)-t {L_f}_r'(\lambda)\}  & if\ \lambda>0\ or\   (\lambda=0\ and\ {L_f}_r'(0)>-\infty)
  \\
 \lim_n {L_f}^*(x_n) =0  &  if\  \lambda=0\ and\ {L_f}_r'(0)=-\infty.
      \end{array}
    \right.
\end{multline*}

\end{document}